\def\namedlabel#1#2{\begingroup
    #2%
    \def\@currentlabel{#2}%
    \phantomsection\label{#1}\endgroup
}
\definecolor{dullmagenta}{rgb}{0.4,0,0.4}   
\definecolor{darkblue}{rgb}{0,0,0.4}
\definecolor{darkgreen}{rgb}{0,0.4,0}
\def\XXint#1#2#3{{\setbox0=\hbox{$#1{#2#3}{\int}$}
     \vcenter{\hbox{$#2#3$}}\kern-.5\wd0}}
\newtheorem{theorem}{Theorem}[section]
\newtheorem*{theorem*}{Theorem}
\newtheorem{lemma}[theorem]{Lemma}
\newtheorem*{lemma*}{Lemma}
\newtheorem{proposition}[theorem]{Proposition}
\newtheorem{corollary}[theorem]{Corollary}
\theoremstyle{definition}
\newtheorem{definition}[theorem]{Definition}
\newtheorem{example}[theorem]{Example}
\theoremstyle{remark}
\newtheorem{remark}[theorem]{Remark}
\newtheorem{question*}[theorem]{Question}
\numberwithin{equation}{section}
\newcommand{\grm}{\mathrm{germ}}
\newcommand{\sgn}{\mathrm{sgn}}
\newcommand{\pp}{{p(\cdot)}}
\newcommand{\pprime}{p^\prime(\cdot)}
\newcommand{\Lp}{L^{p(\cdot)}}
\newcommand{\Lpprime}{L^{p'(\cdot)}}
\newcommand{\Lpg}{L^{p(\cdot)}_{\mathrm{germ}}}
\newcommand{\lp}{\ell^{p(\cdot)}}
\newcommand{\lpprime}{\ell^{p'(\cdot)}}
\newcommand{\cpp}{{p'(\cdot)}}
\newcommand{\Pp}{\mathcal P}
\newcommand{\ba}{\mathrm{ba}}
\newcommand{\B}{\mathcal{B}}
\DeclareMathOperator*{\esssup}{ess\,sup}
\newcommand{\supp}{\mathrm{supp}}
\DeclareMathOperator{\acc}{acc}
\newcommand{\A}{\M}
\newcommand{\id}{\mathbf{1}} 
\newcommand{\linf}{\ell^{\infty}}
\newcommand{\M}{\mathcal{M}_\pp} 
\newcommand{\LpQ}{\Lpg} 
\DeclareMathOperator*{\essinf}{ess\,inf}
\newcommand{\R}{\mathbb R}
\newcommand{\N}{\mathbb N}
\newcommand{\Z}{\mathbb Z}
\title[On the dual of variable Lebesgue spaces]
{On the dual of variable Lebesgue spaces with unbounded exponent}
\author[A. Amenta]{Alex Amenta}
\address{\noindent Alex Amenta \newline
    \indent Mathematisches Institut
    \newline \indent Universit\"at Bonn, Bonn, Germany}
\email{amenta@math.uni-bonn.de}
\author[J.M.\ Conde-Alonso]{Jos\'e M. Conde-Alonso}
\address{\noindent Jos\'e M. Conde-Alonso \newline
  \indent Departamento de Matem\'aticas, Facultad de Ciencias
  \newline \indent Universidad Aut\'onoma de Madrid, Cantoblanco, Spain}
\email{jose.conde@uam.es}
\author[D. Cruz-Uribe]{David Cruz-Uribe, OFS}
\address{\noindent David Cruz-Uribe, OFS \newline
  \indent Department of Mathematics \newline \indent University of Alabama,
Tuscaloosa, AL 35487, USA}
\email{dcruzuribe@ua.edu}
\author[J. Oc\'ariz]{Jes\'us Oc\'ariz}
\address{\noindent Jes\'us Oc\'ariz \newline
  \indent Departamento de Matem\'aticas, Facultad de Ciencias
  \newline \indent Universidad Aut\'onoma de Madrid, Cantoblanco, Spain}
\email{jesus.ocariz@uam.es}
\subjclass[2010]{42B35, 46A20, 46B10, 46B45, 46E30}
\keywords{variable Lebesgue spaces, dual spaces, sequence spaces}
\thanks{The first author is supported by a Fellowship for Postdoctoral Researchers from the Alexander von Humboldt Foundation. Part of this research was completed while he was a postdoctoral researcher at the TU Delft, supported by the VIDI subsidy 639.032.427 of the Netherlands Organisation for Scientific Research (NWO). 
  The third author is supported by research funds from the Dean of the College of Arts \& Sciences, the University of Alabama. 
The fourth author is partially supported by the grant MTM2017-83496-P from the Spanish Ministry of Economy and Competitiveness and through the “Severo Ochoa Programme for Centres of Excellence in R\&D” (SEV-2015-0554)}
\begin{document}

\allowdisplaybreaks

\begin{abstract}
  We study the dual space of the variable Lebesgue space $\Lp$ with
  unbounded exponent function $\pp$ and provide an answer to a
  question posed in~\cite{fiorenza-cruzuribe2013}.  Our approach is to
  decompose the dual into a topological direct sum of Banach spaces.  The first
  component corresponds to the dual in the bounded exponent case, and
  the second is, intuitively, the dual of functions that live where
  the exponent is unbounded (in a heuristic sense).  The second space is extremely complicated, and
  we illustrate this with a series of examples.  In the special case
  of the variable sequence space $\ell^\pp$, we show that this piece
  can be further decomposed into two spaces,  one of which  can be
  characterized in terms of a generalization of finitely additive measures.
As part of our work, we also considered the question of  dense
 subsets in $\Lp$ for unbounded exponents.   We constructed two
 examples, one for general variable Lebesgue spaces $\Lp$ and one in the sequence space $\lp$.  This gives an answer
 to another question from~\cite{fiorenza-cruzuribe2013}.
\end{abstract}

\maketitle

\section{Introduction}
\label{section:introduction}

Variable Lebesgue spaces are a generalization of the classical
Lebesgue spaces $L^p$, in which the exponent $p \in [1,\infty]$ is
replaced by a function. Let 
$(\Omega, { \mathcal{A}}, \mu)$ be a measure space, and let $\Pp(\Omega)$ be the collection
of all measurable functions
$\pp \colon \Omega \to [1,\infty]$; we will refer to the elements of
$\Pp(\Omega)$ as exponent functions.   For each $\pp \in \Pp(\Omega)$, define the modular 
\[ 
\rho_{\pp}(f)
  :=
  \left(\int_{\Omega\setminus \Omega_\infty} \left|f(x)\right|^{p(x)} \; d\mu(x)
    +
    \|f \|_{L^\infty(\Omega_\infty)}\right), \]
where $\Omega_\infty := \{ x\in \Omega : p(x)=\infty \}$.  
Given a measurable function $f$, we say that $f\in \Lp{ (\Omega)}$ if
there exists $\lambda>0$ such that $\rho_{{\pp}}(f/\lambda)<\infty$.  This set
becomes a Banach function space when equipped with the Luxemburg norm
\begin{equation*}
  \|f\|_{L^\pp(\Omega)}
  :=
  \inf \left\{\lambda>0: \; \rho_{\pp}(f/\lambda)  {  \leq } 1 \right\}.
\end{equation*}
For ease of notation we usually omit mention of $\Omega$, writing
$\Lp$ for $\Lp(\Omega)$, $\|f\|_{\pp}$ for $\|f\|_{L^{\pp}(\Omega)}$, {$\mathcal{P}$ for $\mathcal{P}(\Omega)$} 
and so on.  When $\Omega=\N$, {$\mathcal{A}=\mathcal{B}(\N)$},  and $\mu$ is the discrete counting
measure, we will denote the space by $\ell^\pp$. 

The variable Lebesgue spaces were first introduced by
Orlicz~\cite{0003.25203}.  They have been {widely} studied for the
past thirty years, both for their intrinsic interest as function
spaces and for their applications to PDEs and the calculus of variations
with nonstandard growth conditions.  For further details of this
history, including extensive references, we refer the reader to the
monographs~\cite{fiorenza-cruzuribe2013,
 diening-harjulehto-hasto-ruzicka2010}.  The variable Lebesgue spaces
are an important special case of the more general Musielak--Orlicz
spaces~\cite{hasto-harjulehto2019,MR724434}.   

\medskip

In the study of the variable Lebesgue spaces, a fundamental distinction
is whether the exponent function is unbounded.  For convenience, we
define
\[ p_+:= \esssup_{x \in \Omega} p(x); \]
then it matters greatly whether $p_+=\infty$ or $p_+<\infty$.  In the
latter case, the spaces $\Lp$ behave much more like the classical
$L^p$ spaces, $1 \leq p<\infty$.  For instance, in this case bounded functions of
compact support and $C_c^\infty$ are {both} dense in $\Lp$.  Moreover, the
dual space can be completely characterized.   Define the dual exponent
$\cpp \in \Pp$ pointwise by
\begin{equation*}
  \frac{1}{p(x)} + \frac{1}{p'(x)} = 1,
\end{equation*}
where we use the convention that $1/\infty=0$.  Then if $p_+<\infty$,
the dual of $\Lp$ is isomorphic to $L^\cpp$, via the mapping $g \mapsto
\phi_g$, where $g\in L^\cpp$ and
\[ \phi_g (f)  = \int_\Omega f(x)g(x)\,d{\mu}(x).  \]
That $\phi_g \in (\Lp)^*$ follows at once from the generalized H\"older's
inequality,
\begin{equation}\label{eqn:holder}
  \int_\Omega |f(x)g(x)| \, d\mu(x) \leq C \|f\|_{\pp}
  \|g\|_{\pprime}. 
\end{equation}
The converse, that given $\phi \in (\Lp)^*$ there exists $g\in L^\cpp$
such that $\phi=\phi_g$, is more difficult, and the fact that $\pp$ is
bounded is central to the proof.  We note in passing that while {the
  map $g\mapsto \phi_g$ is an isomorphism} between $(\Lp)^*$ and
$L^\cpp$ it is not an isometry unless $\pp$ is constant.  This is
related to the fact that the sharp constant in~\eqref{eqn:holder} is
greater than $1$ unless $\pp$ is constant.
See~\cite[Theorem~2.80]{fiorenza-cruzuribe2013} for a detailed proof
of duality.

On the other hand, when $p_+=\infty$, the space $\Lp$ exhibits much
more pathological behavior.  Bounded functions of compact support are
not dense, the space is not separable, and it is possible to show that
there exist elements in $ (\Lp)^*$ that are not induced by elements of
$L^\cpp$.  Various authors have studied the topological and function
space properties of $\Lp$: see, for example,~\cite{MR3881841,
  MR1700499, MR3378865, MR3926175, MR2763369}.  However, to the best of
our knowledge no one has addressed the problem of characterizing the
dual of $\Lp$ when $p_+=\infty$.  This was stated explicitly as an
open problem in~\cite[Problem~A.3]{fiorenza-cruzuribe2013}, but prior
to this the question was
part of the folklore in the study of variable Lebesgue spaces.

The purpose of this paper is to provide new insight into the
dual space $(\Lp)^*$ when $p_+=\infty$.  We will always assume without
loss of generality that $p(x)<\infty$ $\mu$-almost everywhere.  For otherwise,
we have the natural decomposition 
\[ \Lp(\Omega) = \Lp(\Omega\setminus \Omega_\infty) \oplus  L^\infty(\Omega_\infty), \]
and so
\[ \Lp(\Omega) ^* \cong \Lp(\Omega\setminus \Omega_\infty)^* \oplus
  L^\infty(\Omega_\infty)^*.\]
The dual space $L^\infty(\Omega_\infty)^*$ is completely
characterized:  it is  isometrically isomorphic to the space
of finitely additive measures over $\Omega_\infty$ which are absolutely continuous with respect to $\mu$ (see Yosida and
Hewitt~\cite[Theorem~2.3]{yosida-hewitt1952}).  Therefore, it remains
to characterize the dual space when $\pp$ is everywhere finite but
unbounded.

The remainder of this paper is organized as follows.  In
Section~\ref{section1} we will consider the general case and give a
decomposition of $(\Lp)^*$ as the direct sum of $L^\cpp$ and the dual
of a quotient space we refer to as the germ space, and denote by
$L^\pp_\grm$.  Elements of this quotient space can be thought of as
generalized functions define{d} on the singularity of $\pp$, which contain the
essential data of how a function $f \in \Lp$ behaves with respect to
this singularity.  In referring to them as ``germs''  we borrow
terminology from sheaf theory.  We will give some basic properties of this space, and a
series of examples which illustrate the complicated nature of its dual
space.   We will also consider briefly the case when $L^\infty$ is
contained in $\Lp$, since in~\cite[Problem~A.3]{fiorenza-cruzuribe2013} it was
suggested that whether or not $L^\infty \subset \Lp$ might be pertinent to the problem.
Our examples suggest that in general it may not be.  

In Section~\ref{section2} we restrict
our attention to the special case of the variable sequence spaces
$\ell^\pp$.  In this case we show that the dual of the germ space can be further
decomposed into two pieces, one of which can be characterized in
terms of a natural generalization of finitely additive
measures.  In the special case when $\ell^\infty \subset \ell^\pp$,
one of these pieces is trivial, so at least in the setting of variable
sequence spaces this distinction matters.

One approach to  characterizing the elements of the dual space of $\Lp$ is to
find suitable dense subsets of $\Lp$ and describe the linear
functionals in terms of their action on these sets.   In Section~\ref{section:dense} we give a
description of a dense subset of $\Lp$ when $\pp$ is unbounded.  Again
in the special case of $\lp$ we give a different example which
actually lets us sketch a characterization of the third piece of the
dual space mentioned above.  This characterization is both complicated
and artificial, and suggests that further work is necessary to
fully describe the dual space.   We also note that our results on
dense subsets answers
another question raised in~\cite[Problem~A.2]{fiorenza-cruzuribe2013},
and we believe that these sets will be
useful in studying other problems on variable Lebesgue spaces with
unbounded exponents.

Throughout this paper our notation will be standard or defined as
needed.  For the variable Lebesgue spaces we will follow the notation
used in~\cite{fiorenza-cruzuribe2013}.  The value of the constants
$C$, $c$ may change from line to line, but they will always be
independent of the other quantities in the expression in which they
appear unless we specifically indicate this (for instance, by writing
$C_\pp$ to indicate the constant depends on $\pp$).  For positive
numbers $A$ and $B$, we write $A \lesssim B$ to mean that there exists
a constant $c > 0$ such that $A \leq cB$.  We write $A \simeq B$ to
mean that $A \lesssim B$ and $B \lesssim A$.


\section{General variable Lebesgue spaces} \label{section1}

Throughout  this section we fix a $\sigma$-finite measure space
$(\Omega,\mathcal{A},\mu)$.  Given an exponent function $\pp \in
\Pp(\Omega)$, 
we are most interested in the case where $p_+=\infty$  but we will
generally explicitly assume this fact if we need it.  However, following
the discussion in the Introduction, we will
always assume without comment that $\mu(\Omega_\infty)=0$.

\subsection{The dual of $\Lp(\Omega)$}
Given a
measurable set $E{\subseteq} \Omega$, let
\[ p_+(E) {:}= \esssup_{x\in E} p(x), \qquad  p_-(E) {:}= \essinf_{x\in E} p(x).  \]
We say that $E$ is
  {\it $\pp$-bounded} if $p_+(E) < \infty$.

    Define $\Lp_b \subset \Lp$ to be the subspace of functions with
    $\pp$-bounded support,
    \[ \Lp_b := \{ f \in \Lp :  p_+(\supp(f)) < \infty\},   \]
    and let $\overline{\Lp_b}$ be the closure
    of $\Lp_b$ in $\Lp$.  Let $\LpQ := \Lp / \overline{\Lp_b}$ denote
    the corresponding quotient space, with the usual quotient norm
  \begin{equation*}
    \|[f]\|_{\LpQ} := \inf\{\|f - g\|_{\Lp} : g \in \overline{\Lp_b}\}.
  \end{equation*}

  We can now state our main result.

  \begin{theorem}\label{dual-splitting}
Given an exponent function $\pp {\in} \Pp$ such that
$p_+=\infty$, the dual space $(\Lp)^*$ is isomorphic to {the external direct sum}
$\Lpprime \oplus (\Lp_\grm)^*$.
\end{theorem}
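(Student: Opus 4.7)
The plan is to dualize the short exact sequence
\[
0 \longrightarrow \overline{\Lp_b} \xrightarrow{\iota} \Lp \xrightarrow{\pi} \Lp_\grm \longrightarrow 0
\]
and exhibit a canonical splitting of the dual sequence
\[
0 \longrightarrow (\Lp_\grm)^* \xrightarrow{\pi^*} (\Lp)^* \xrightarrow{\iota^*} (\overline{\Lp_b})^* \longrightarrow 0,
\]
which is exact by Hahn--Banach. The splitting will come from first identifying $(\overline{\Lp_b})^* \cong \Lpprime$ and then lifting each $g \in \Lpprime$ back to the globally defined functional $\phi_g \in (\Lp)^*$ supplied by~\eqref{eqn:holder}.

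The core step is the identification $(\overline{\Lp_b})^* \cong \Lpprime$. The map $g \mapsto \phi_g|_{\overline{\Lp_b}}$ is bounded by~\eqref{eqn:holder}, and is injective because if $\phi_g$ annihilates $\Lp_b$, then testing against $\sgn(g)\,\id_E$ for every $\pp$-bounded $E$ forces $g=0$ a.e.\ on $E$, and exhausting $\Omega$ by such sets (available since $\mu(\Omega_\infty)=0$ and $\mu$ is $\sigma$-finite) gives $g=0$ a.e. For surjectivity, I would fix the exhaustion $E_n := \{p \leq n\}\cap A_n$, where $A_n \uparrow \Omega$ with $\mu(A_n) < \infty$, so that $p_+(E_n) \leq n$. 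Since each $E_n$ is $\pp$-bounded, the bounded-exponent duality \cite[Theorem~2.80]{fiorenza-cruzuribe2013} represents $\phi|_{\Lp(E_n)}$ uniquely as integration against some $g_n \in \Lpprime(E_n)$ with $\|g_n\|_{\Lpprime(E_n)} \lesssim \|\phi\|$. Uniqueness of $g_n$ makes the densities compatible on overlaps, producing an a.e.-defined $g$ on $\Omega$, and the Fatou property of the Luxemburg norm applied to $|g|\,\id_{E_n} \uparrow |g|$ upgrades the uniform local bound to $\|g\|_{\pprime} \lesssim \|\phi\|$. Density of $\Lp_b$ in $\overline{\Lp_b}$ then shows that $\phi_g$ and $\phi$ agree on all of $\overline{\Lp_b}$.

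With this identification in hand, the decomposition is essentially formal. Given $\phi \in (\Lp)^*$, let $g \in \Lpprime$ represent $\iota^*\phi$ and write $\phi = \phi_g + (\phi - \phi_g)$. The first summand is the canonical lift of $g$; the second vanishes on $\overline{\Lp_b}$ by construction and therefore factors through $\pi$ as a unique element $\widetilde\phi \in (\Lp_\grm)^*$. Uniqueness of the decomposition is immediate from the injectivity above: if $\phi_g$ annihilates $\overline{\Lp_b}$, then $g=0$, so the two summands meet only at $0$. Both components depend linearly and boundedly on $\phi$, so $\phi \mapsto (g,\widetilde\phi)$ is the advertised Banach-space isomorphism $(\Lp)^* \cong \Lpprime \oplus (\Lp_\grm)^*$.

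The main obstacle will be the surjectivity half of $(\overline{\Lp_b})^* \cong \Lpprime$: assembling the locally defined densities $g_n$ into a single global element of $\Lpprime(\Omega)$ with the right norm control. A priori, the duality constants arising from \cite[Theorem~2.80]{fiorenza-cruzuribe2013} on each piece could degrade as $p_+(E_n) = n \to \infty$; what rescues the argument is that the H\"older constant in~\eqref{eqn:holder} is universal (independent of $\pp$), so the inequality $\|g_n\|_{\Lpprime(E_n)} \lesssim \|\phi\|$ holds with a fixed constant across all $n$, and the global estimate then follows from a single Fatou-type passage in the variable Lebesgue norm. A smaller bookkeeping point is to verify that $\pi^*$ really identifies $(\Lp_\grm)^*$ isometrically with the annihilator of $\overline{\Lp_b}$ inside $(\Lp)^*$, but this is the standard Banach-space duality for quotients by closed subspaces.
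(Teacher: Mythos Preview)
Your proposal is correct and takes essentially the same approach as the paper: dualize the quotient exact sequence, identify $(\overline{\Lp_b})^* \cong \Lpprime$ via the exhaustion-and-patching argument (this is precisely the content of Proposition~\ref{dualOfLpBounded}), and split the dual sequence using the global lift $g \mapsto \phi_g$ furnished by~\eqref{eqn:holder}. Your write-up makes the section of $\iota^*$ explicit, whereas the paper packages the splitting by citing Rudin; otherwise the arguments coincide, including your correct observation that the local duality constants stay uniform because the associate-norm/H\"older constant is bounded independently of $p_+(E_n)$.
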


\begin{remark}
If $\pp$ is bounded, then every subset of $\Omega$ is $\pp$-bounded, so
$\Lp_b = \Lp$, and $\LpQ$ is trivial.  Thus
Theorem~\ref{dual-splitting} reduces to the known result when
$p_+<\infty$.    On the other hand, if $\pp$ is unbounded, then $\LpQ$
is always nontrivial; we will show this fact below in
Proposition~\ref{Lpb-not-dense}. 
\end{remark}

\medskip

To prove Theorem~\ref{dual-splitting},    recall
that given $g\in L^\cpp$, we can define $\phi_g \in (\Lp)^*$ by
\begin{equation} \label{eqn:prime-functional}
\phi_g(f) = \int_\Omega f(x)g(x)\,d\mu(x){.} 
\end{equation}
When $p_+=\infty$, this does not yield every element of the dual
space.    However, we do have that
every element of $(\Lp_b)^*$ is gotten in this way.  

\begin{proposition} \label{dualOfLpBounded} Given a functional
  $\phi \in (\Lp)^*$, there exists a unique function
  $g_\phi \in L^{\pprime}$ such that for all $f \in \overline{\Lp_b}$,
\begin{equation} \label{eq:dualityproduct}
  \phi(f) = \int_\Omega f(x) g_\phi(x) \, d\mu(x),
\end{equation}
and 
\begin{equation*}
  \|g_\phi\|_{p^\prime(\cdot)} \lesssim \|\phi\|_{(\Lp)^*}.
\end{equation*}
\end{proposition}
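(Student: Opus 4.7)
The plan is to exploit the $\sigma$-finiteness of $\mu$ together with the a.e.\ finiteness of $\pp$ to reduce the problem to the bounded-exponent case on an exhausting sequence of $\pp$-bounded subsets of finite measure, and then glue the local representing functions into a global $g_\phi$. Writing $\Omega = \bigcup_k \Omega_k$ with $\mu(\Omega_k)<\infty$ and $\Omega_k \subseteq \Omega_{k+1}$, I would set
\[ E_n := \{x \in \Omega_n : p(x) \leq n\}. \]
Then $p_+(E_n)\leq n$, the $E_n$ are increasing, and $\bigcup_n E_n = \Omega$ up to a null set, using $\mu(\Omega_\infty)=0$.

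For each $n$, I would view $\Lp(E_n)$ as a subspace of $\Lp(\Omega)$ by extension by zero. Since $p_+(E_n)<\infty$, the bounded-exponent duality (Theorem~2.80 in \cite{fiorenza-cruzuribe2013}) applied to the restriction $\phi|_{\Lp(E_n)}$ yields a unique $g_n \in L^{\pprime}(E_n)$ with
\[ \phi(f) = \int_{E_n} f\, g_n \, d\mu \quad \text{for all } f \in \Lp(E_n), \]
and with $\|g_n\|_{\pprime} \leq K \|\phi\|_{(\Lp)^*}$, where $K$ is a \emph{universal} constant. The uniqueness part of the same theorem forces $g_{n+1}|_{E_n} = g_n$ a.e., so $g_\phi := g_n$ on $E_n$ is well-defined a.e.\ on $\Omega$. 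Since $|g_\phi \id_{E_n}| \nearrow |g_\phi|$, monotone convergence for the modular gives $\rho_{\pprime}(g_\phi/\lambda) = \lim_n \rho_{\pprime}(g_n/\lambda)$; choosing $\lambda = K\|\phi\|_{(\Lp)^*}$ makes each term $\leq 1$, hence $\|g_\phi\|_{\pprime} \leq K \|\phi\|_{(\Lp)^*}$.

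It remains to verify the integral representation on $\overline{\Lp_b}$. For $f \in \Lp_b$ with $\supp(f) \subseteq \{p \leq M\}$, the cutoffs $f\id_{E_n}$ converge to $f$ in $\Lp$: the modular $\rho_\pp(f - f\id_{E_n})$ tends to $0$ by dominated convergence, which upgrades to norm convergence because the exponent is bounded by $M$ on $\supp(f)$. Combining $\phi(f\id_{E_n}) = \int f\id_{E_n}\, g_\phi\, d\mu$ with the continuity of $\phi$ on the left and H\"older's inequality~\eqref{eqn:holder} on the right gives $\phi(f) = \int f\, g_\phi\, d\mu$ for all $f \in \Lp_b$; continuity of both sides then extends the identity to $\overline{\Lp_b}$. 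Uniqueness is standard: if $\widetilde g \in L^{\pprime}$ also represents $\phi$ on $\overline{\Lp_b}$, then $\int f(g_\phi - \widetilde g)\, d\mu = 0$ for all $f \in \Lp_b$, and testing against $\sgn(g_\phi - \widetilde g)\id_A$ for $\pp$-bounded sets $A$ of finite measure forces $g_\phi = \widetilde g$ a.e.

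The main obstacle to watch is the uniformity of the constant $K$ in the bounded-exponent duality across the sequence $E_n$, since naively one might fear that $K$ deteriorates as $p_+(E_n)\to\infty$. This does not occur: the H\"older constant for the variable Lebesgue inequality is bounded by an absolute constant (no worse than $2$) independent of the upper bound of $\pp$, and the inverse bound in the duality isomorphism is also universal. Hence $K$ can be chosen once and for all, yielding the claimed estimate $\|g_\phi\|_{\pprime} \lesssim \|\phi\|_{(\Lp)^*}$ with an absolute implicit constant.
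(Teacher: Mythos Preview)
Your proposal is correct and follows essentially the same route as the paper's proof: exhaust $\Omega$ by an increasing sequence of $\pp$-bounded sets, apply the bounded-exponent duality on each piece, patch the local representatives into a global $g_\phi$ by uniqueness, invoke monotone convergence for the norm bound, and extend the integral representation to $\overline{\Lp_b}$ by continuity. Your write-up is in fact slightly more explicit than the paper's on the uniformity of the duality constant as $p_+(E_n)\to\infty$, on the passage from the $E_n$ to all of $\Lp_b$, and on the uniqueness of $g_\phi$; the additional finite-measure constraint on the $E_n$ is harmless but unnecessary.
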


\begin{proof}
  Fix a functional $\phi \in (\Lp)^*$.  First note that if
  $f \in \Lp(\Omega)$ and $E \subset \Omega$, then $f|_E \in \Lp(E)$
  with $\|f|_E\|_{\Lp(E)} = \| \id_E f \|_{\Lp(\Omega)}$.  Conversely,
  if $h \in \Lp(E)$, then $\tilde{h}$, the extension by zero of $h$ to
  $\Omega$, is in $\Lp(\Omega)$ with
  $\| \tilde{h} \|_{\Lp(\Omega)} = \| h \|_{L^{p(\cdot)}(E)}$.  Given
  a $\pp$-bounded set $E \subset \Omega$, for all $h \in \Lp(E)$
  define
\begin{equation*}
	\phi_E(h) := \phi(\tilde{h}).
\end{equation*}
Then we have
\begin{equation*}
  |\phi_E(h)|
  =
  |\phi(\tilde{h})|
  \leq
  \| \phi \|_{\Lp(\Omega)^*} \|\tilde{h}\|_{\Lp(\Omega)}
  =
  \| \phi \|_{\Lp(\Omega)^*} \| h \|_{\Lp(E)},
\end{equation*}
so $\phi_E$ is in $\Lp(E)^*$ with
$\| \phi_E \|_{\Lp(E)^*} \leq \| \phi \|_{\Lp(\Omega)^*}$.   Since
$p_+(E)<\infty$, there exists a unique function
$g_\phi^E \in L^{\pprime}(E)$ such that
for all $f \in \Lp(E)$,
\begin{equation*}
	\phi_E(f) = \int_E f(x) g_\phi^E(x) \, d\mu(x),
\end{equation*}
and with
$\| g_\phi^E \|_{L^{p^\prime(\cdot)}(E)} \simeq \| \phi_E\|_{\Lp(E)^*}
\leq \|\phi\|_{(\Lp)^*}$.  

Since we can write $\Omega$ as the union of
an increasing sequence of $\pp$-bounded sets $\{E_i\}_{i=1}^\infty$,  by a standard patching argument there
exists a unique measurable function $g_\phi$ on $\Omega$ such that for
all $i$, ${g_\phi}|_{E_i} = g_\phi^{E_i}$ with
$\|\id_{E_i} g_\phi\|_{L^{\pprime}(\Omega)} \lesssim \| \phi
\|_{\Lp(\Omega)^*}$ with implicit constant independent of $i$.
Furthermore, for every $h \in \Lp_b$, we have
\begin{equation}\label{eqn:repn}
  \int_\Omega h(x) g_\phi(x) \, d\mu(x) = \phi(g).
\end{equation}

It remains to show that $g_\phi$ is in $\Lpprime(\Omega)$, and that \eqref{eqn:repn} holds for all $h$ in the closure $\overline{\Lp_b}$.
The functions $\id_{E_i} |g_\phi|$ increase pointwise a.e. to
$|g_\phi|$, so by the monotone convergence theorem for $\Lpprime$
\cite[Theorem 2.59]{fiorenza-cruzuribe2013} we have that
$g_\phi \in \Lpprime(\Omega)$ and
\begin{equation*}
  \|g_\phi\|_{\Lpprime(\Omega)} = \lim_{i \to \infty} \| \id_{E_i} g_\phi \|_{\Lpprime(\Omega)} \lesssim \| \phi \|_{\Lp(\Omega)^*}.
\end{equation*}
Finally, since
\begin{equation*}
  \int_\Omega |h(x) g_\phi| \, dx 
\lesssim 
\|h\|_{\Lp(\Omega)} \|g_\phi\|_{\Lpprime(\Omega)} 
< \infty
\end{equation*}
for all $h \in \Lp_b(\Omega)$, by the dominated convergence theorem
the representation \eqref{eqn:repn} holds for all $h$ in the closure
$\overline{\Lp_b}$.
\end{proof}

Our decomposition of the dual can now be proved using some basic
results from functional analysis.

\begin{proof}[Proof of Theorem~\ref{dual-splitting}]
We have the the exact sequence of bounded linear maps
\[
\begin{tikzcd}
  0 \arrow[r] & \Lp_b  \arrow[r, "i"] & \Lp  \arrow[r, "\pi"] & \LpQ  \arrow[r] & 0,
\end{tikzcd}
\]
where $i$ is the inclusion map and $\pi$ is the quotient map.  
This  induces the dual exact sequence
\[
\begin{tikzcd}
  0 \arrow[r] & (\LpQ)^*  \arrow[r, "i^*"] & (\Lp)^*  \arrow[r, "\pi^*"] & (\Lp_b)^*  \arrow[r] & 0.
\end{tikzcd}
\]
By ~\cite[Theorem~5.16]{MR1157815}, since $\pi^*$ is a bounded
 projection,  we can
write $(\Lp)^*$ as the (internal) direct sum
\[ (\Lp)^* = (\Lp_b)^* \oplus (\Lp_b)^\perp, \]
where $(\Lp_b)^\perp$ is the set of linear functionals that contain
$\Lp_b$ in their kernels.
%
To complete the proof, we first note that by \cite[Theorem
III.10.2]{MR1070713},  $(\Lp_b)^\perp$ is isometric{ally} isomorphic to $(\LpQ)^*$.
Finally, by  Proposition \ref{dualOfLpBounded}, we have that $(\Lp_b)^*\simeq \Lpprime$.
\end{proof}

\begin{remark}\label{rem:decompositionDual}
  In the first part of proof of Theorem~\ref{dual-splitting} we do not actually use
  any particular property of $\Lp$ as a Banach space.  Thus, we
  always have that for any Banach space $X$  and a closed subspace
  $Y$ we can write the dual as 
$$
X^*=Y^*\oplus (X/Y)^*,
$$ 
that is, $Y^*$ is a complemented subspace of $X^*$.
\end{remark}

\medskip

\subsection{The germ space $\Lp_\grm$}
To further characterize the dual of $\Lp$ we need to understand the
structure of $(\Lp_\grm)^*$, which in turn means understanding the
generalized functions in the germ space.  We give  several equivalent characterizations of the norm in
$\Lp_\grm$ and apply them to construct several examples.

\begin{lemma} \label{characlpgerm}
  Given $\pp \in \Pp$,  for every $f\in \Lp$,
  \begin{equation*}
    \Vert [f]\Vert_{\LpQ} = \inf \left\{\lambda >0:  \rho_{{\pp}}(f/\lambda) < \infty \right\}.
  \end{equation*}
\end{lemma}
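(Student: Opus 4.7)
The plan is to set $\nu(f) := \inf\{\lambda > 0 : \rho_\pp(f/\lambda) < \infty\}$ and prove the two inequalities $\|[f]\|_{\LpQ} \leq \nu(f)$ and $\nu(f) \leq \|[f]\|_{\LpQ}$ separately. Before doing either, I would record two preliminary observations that are used throughout: \emph{(i)} if $g \in \Lp_b$ and $\pp$ is essentially bounded by some $M$ on $\supp(g)$, then $\rho_\pp(g/\eta) < \infty$ for \emph{every} $\eta > 0$ (writing $|g/\eta|^{p(x)} = (\lambda_0/\eta)^{p(x)} |g/\lambda_0|^{p(x)}$ for any scale $\lambda_0$ witnessing $g \in \Lp$, the factor $(\lambda_0/\eta)^{p(x)}$ is bounded by $\max(1,(\lambda_0/\eta)^M)$ on $\supp(g)$); and \emph{(ii)} by density, $\inf_{g \in \overline{\Lp_b}}\|f-g\|_\pp = \inf_{g \in \Lp_b}\|f-g\|_\pp$, so it is enough to work with $g \in \Lp_b$ throughout.

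For $\|[f]\|_{\LpQ} \leq \nu(f)$, I would fix any $\lambda > \nu(f)$ and introduce the $\pp$-bounded sets $E_n := \{x \in \Omega : p(x) \leq n\}$ together with the truncations $f_n := f\,\id_{E_n} \in \Lp_b$. Since $\mu(\Omega_\infty)=0$, the modular is just an ordinary integral, and since $\rho_\pp(f/\lambda) < \infty$ by hypothesis, the monotone (or dominated) convergence theorem applied to the sequence $|f/\lambda|^{p(x)}\id_{\{p > n\}}$, which decreases pointwise a.e.\ to $0$, gives $\rho_\pp((f-f_n)/\lambda) \to 0$. In particular $\|f-f_n\|_\pp \leq \lambda$ for all large $n$, whence $\|[f]\|_{\LpQ} \leq \lambda$; letting $\lambda \downarrow \nu(f)$ finishes this direction.

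For $\nu(f) \leq \|[f]\|_{\LpQ}$, I would fix $g \in \Lp_b$ and any $\lambda' > \|f-g\|_\pp$, so that $\rho_\pp((f-g)/\lambda') \leq 1$. The hard part will be that the unbounded exponent rules out naive estimates: the triangle bound $|f|^{p(x)} \leq 2^{p(x)-1}(|f-g|^{p(x)} + |g|^{p(x)})$ is useless because $2^{p(x)}$ is unbounded. The fix is to use a genuine convex combination. For any $\eta > 0$, set $\lambda'' := \lambda' + \eta$ and $\alpha := \lambda'/\lambda'' \in (0,1)$, so $1-\alpha = \eta/\lambda''$ and
\[
\frac{f}{\lambda''} \;=\; \alpha \cdot \frac{f-g}{\lambda'} \;+\; (1-\alpha)\cdot \frac{g}{\eta}.
\]
Convexity of $t \mapsto t^{p(x)}$ followed by integration then yields
\[
\rho_\pp(f/\lambda'') \;\leq\; \alpha\,\rho_\pp((f-g)/\lambda') \;+\; (1-\alpha)\,\rho_\pp(g/\eta) \;<\; \infty,
\]
where finiteness of the second modular on the right uses observation \emph{(i)}. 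Thus $\nu(f) \leq \lambda'+\eta$; letting $\eta \downarrow 0$ and then $\lambda' \downarrow \|f-g\|_\pp$ gives $\nu(f) \leq \|f-g\|_\pp$, and taking the infimum over $g \in \Lp_b$, combined with observation \emph{(ii)}, completes the proof.
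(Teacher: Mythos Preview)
Your proof is correct and follows the same overall structure as the paper's: the inequality $\|[f]\|_{\LpQ}\leq\nu(f)$ is proved identically (truncate $f$ to the $\pp$-bounded sets $\{p\leq n\}$ and use dominated convergence on the tail modular), and both arguments rely on the key observation that $\rho_\pp(g/\eta)<\infty$ for every $\eta>0$ whenever $g\in\Lp_b$.

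The one genuine difference is in the reverse inequality. The paper splits $\Omega$ into $\supp(g)$ and its complement and uses the pointwise bound $|a+b|^{p(x)}\leq 2^{p(x)-1}(|a|^{p(x)}+|b|^{p(x)})$, which is admissible on $\supp(g)$ precisely because $p(x)\leq p_+(\supp(g))<\infty$ there; this yields the set inclusion $\{\lambda:\rho_\pp((f-g)/\lambda)<\infty\}\subset\{\lambda:\rho_\pp(f/\lambda)<\infty\}$ directly, with no need to pass to a slightly larger $\lambda$. Your argument instead writes $f/(\lambda'+\eta)$ as a genuine convex combination of $(f-g)/\lambda'$ and $g/\eta$ and invokes convexity of the modular, which is the standard Musielak--Orlicz device and avoids any domain splitting. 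Your route is a bit cleaner and more portable (it would work verbatim in any modular space), at the small cost of the extra $\eta\downarrow 0$ limit; the paper's route gives a marginally sharper intermediate statement but is more hands-on. Either way the conclusion is the same.
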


\begin{proof}
  Fix $f\in \Lp$  and let  $\lambda>0$ be such that
  \begin{equation}\label{eqn:f-lambda}
    \int_\Omega \left| \frac{f(x)}{\lambda}\right|^{p(x)} \, d\mu(x) < \infty.
  \end{equation}
  Let $\{E_n\}_{n=1}^\infty$ be an increasing sequence of $\pp$-bounded
  subsets of $\Omega$ such that
  \[ \bigcup_{n \in \mathbb{N}} E_n = \Omega, \]
  and for $n\in \N$ define
  $f_n=f\id_{\Omega \setminus E_n} $, $g_n = f\id_{E_n}$.  Then $g_n
  \in \Lp_b$,   $f_n$
  converges pointwise a.e. to $0$, and  $[f_n] = [f]$ for all $n$.
  By the dominated convergence theorem we can find $n_0\in \N$ such
  that for all $ n\geq n_0$,
  \begin{equation*}
    \int_\Omega \left| \frac{f(x)-g_n(x)}{\lambda}\right|^{p(x)} \; d\mu(x)
    =
    \int_\Omega \left| \frac{f_n(x)}{\lambda}\right|^{p(x)} \; d\mu(x)
    =
    \int_{\Omega\setminus E_n} \left|
      \frac{f(x)}{\lambda}\right|^{p(x)} \;d\mu(x)
    \leq
    1.
  \end{equation*}
  Therefore, $\lambda \geq \| f- g_{n_0}\|_{\Lp}\geq \| [f] \|_{\Lp_\grm}$.

  \medskip
  
To prove the reverse inequality, 
let $g \in \Lp_b$ be arbitrary and let  $\lambda > 0$ be such that
  \begin{equation*}
    A := \int_\Omega \left| \frac{f(x) - g(x)}{\lambda} \right|^{p(x)} \, d\mu(x) < \infty.
  \end{equation*}
By the definition of the norm $\|g\|_{\pp}$, for all $\kappa>0$ we
have that
  \begin{multline*}
    \int_\Omega \left| \frac{g(x)}{\kappa} \right|^{p(x)} \, d\mu(x)
    = \int_\Omega \left| \frac{g(x)}{\|g\|_{\pp}} \frac{\|g\|_{\pp}}{\kappa} \right|^{p(x)} \, d\mu(x) \\
    \leq \max\{1,(\|g\|_{\pp}/\kappa)^{p_+(\mathrm{supp}(g))} \}< \infty.
  \end{multline*}
  Thus, we have that
  \begin{align*}
     \int_\Omega \left| \frac{f(x)}{\lambda} \right|^{p(x)} \,
     d\mu(x) 
    & 
      = \int_{\mathrm{supp}(g)} \left| \frac{f(x) - g(x) +
      g(x)}{\lambda} \right|^{p(x)} \, d\mu(x) \\
    & \qquad \qquad 
      + \int_{\Omega \setminus \mathrm{supp}(g)} \left| \frac{f(x) -
      g(x)}{\lambda} \right|^{p(x)} \, d\mu(x) \\ 
    & 
      \leq C_{p_+(\mathrm{supp}(g))}
      \int_{\mathrm{supp}(g)}\frac{|f(x) - g(x)|^{p(x)} +
      |g(x)|^{p(x)}}{\lambda^{p(x)}} \, d\mu(x) + A  \\
    &
      \leq C_{p_+(\mathrm{supp}(g))} \bigg( A + \int_\Omega
      \bigg|\frac{g(x)}{\lambda}\bigg|^{p(x)} \, d\mu(x) \bigg) + A \\
    &
      < \infty. 
  \end{align*}
  Therefore, we have the set inclusion
  \begin{multline*}
    \bigg\{ \lambda >0:  \int_{\Omega} \left|
      \frac{f(x)-g(x)}{\lambda}\right|^{p(x)} \; d\mu(x) < +\infty
    \bigg\} \\
    \subset \bigg\{ \lambda >0:  \int_{\Omega} \left| \frac{f(x)}{\lambda}\right|^{p(x)} \;d\mu(x) < +\infty \bigg\},
  \end{multline*}
  and it follows that 
  \begin{align*}
    \| [f] \|_{\LpQ} &= \inf_{g\in \Lp_b} \| f-g \|_{\Lp} \\
                              &\geq \inf_{g\in \Lp_b} \inf \left\{\lambda >0:  \int_{\Omega} \left| \frac{f(x)-g(x)}{\lambda}\right|^{p(x)} \; d\mu(x) < +\infty \right\} \\
                              &\geq \inf \left\{\lambda >0:  \int_{\Omega} \left| \frac{f(x)}{\lambda}\right|^{p(x)} \;d\mu(x) < +\infty \right\}.
\end{align*}
This completes the proof.
\end{proof}

As an application of Lemma~\ref{characlpgerm}  we show that if $\pp$ is
unbounded, then the germ space is never trivial.  

\begin{proposition}\label{Lpb-not-dense}
 Given $\pp\in \Pp$, if $p_+=\infty$, 
  then $\LpQ$ is nontrivial.
\end{proposition}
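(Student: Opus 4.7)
The plan is to apply Lemma~\ref{characlpgerm}: it suffices to exhibit a single $f\in\Lp$ for which $\rho_{\pp}(f/\lambda_0)=\infty$ for some $\lambda_0>0$, since then $\|[f]\|_{\LpQ}\geq\lambda_0>0$ witnesses $[f]\neq 0$ in $\LpQ$.

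Since $p_+=\infty$ and $p<\infty$ a.e., for every $k\in\N$ the set $\{p\geq k\}$ has positive measure. Using the $\sigma$-finiteness of $\mu$, one can inductively select pairwise disjoint measurable sets $F_k\subset\Omega$ with $p(x)\geq k$ on $F_k$ and $0<\mu(F_k)<\infty$, passing to a subsequence of indices when needed so that each residual $\{p\geq k\}\setminus\bigcup_{j<k}F_j$ remains of positive measure. I would then set
\[
  f := \sum_{k}c_k\,\mathbf{1}_{F_k}, \qquad c_k := \bigl(2^{-k}/\mu(F_k)\bigr)^{1/k},
\]
normalized so that $c_k^k\,\mu(F_k)=2^{-k}$. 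Assuming $c_k\leq 1$, the pointwise bound $c_k^{p(x)}\leq c_k^k$ on $F_k$ gives
\[
  \rho_{\pp}(f) \;\leq\; \sum_k c_k^k\mu(F_k) \;=\; \sum_k 2^{-k} \;=\; 1,
\]
so $f\in\Lp$; and for $\lambda>0$ small enough that $c_k/\lambda>1$ for every $k$, the reverse inequality on $F_k$ yields
\[
  \rho_{\pp}(f/\lambda) \;\geq\; \sum_k(c_k/\lambda)^k\mu(F_k) \;=\; \sum_k(2\lambda)^{-k},
\]
which diverges as soon as $\lambda\leq 1/2$. Combined with Lemma~\ref{characlpgerm} this gives $\|[f]\|_{\LpQ}\geq 1/2>0$.

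The main technical obstacle is ensuring that $c_k\leq 1$ uniformly in $k$, equivalently that $\mu(F_k)\geq 2^{-k}$. In the nonatomic case this is automatic: one shrinks each $F_k$ to have measure exactly $2^{-k}$ and simply takes $c_k=1$, so that the two estimates above become a clean pair of bounds on the same geometric series. In the purely atomic case the mass $\mu(F_k)$ is dictated by atom sizes and may be much smaller than $2^{-k}$; here one instead chooses atoms $a_k$ with $p(a_k)\to\infty$ fast enough that the exponent $1/k$ in the definition of $c_k$ can be replaced by $1/p(a_k)$, which still yields a bounded coefficient and preserves both $f\in\Lp$ and divergence of $\rho_{\pp}(f/\lambda)$ at small $\lambda$—feasibility is guaranteed precisely because $p_+=\infty$ forces the existence of atoms with arbitrarily large $p$. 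Splitting the argument along this atomic/nonatomic dichotomy, and checking in each regime that the geometric-type lower bound actually diverges, is the essential work beyond the direct application of Lemma~\ref{characlpgerm}.
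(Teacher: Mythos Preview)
Your overall strategy matches the paper's: build $f$ on disjoint pieces where $p$ is large, tune the coefficients so that $\rho_{\pp}(f/\lambda)$ is finite for some $\lambda$ but infinite for smaller $\lambda$, and invoke Lemma~\ref{characlpgerm}. The gap is in your nonatomic case. You assert that one can always shrink $F_k$ to have measure exactly $2^{-k}$, but the constraint $F_k\subset\{p\ge k\}$ forces $\mu(F_k)\le\mu(\{p\ge k\})$, and nothing prevents the latter from being smaller than $2^{-k}$. Concretely, on $\Omega=(0,1/3)$ with Lebesgue measure and $p(x)=\log_3(1/x)$ one has $\{p\ge k\}=(0,3^{-k}]$, so $\mu(F_k)\le 3^{-k}<2^{-k}$ for every $k\ge1$; then $c_k=(2^{-k}/\mu(F_k))^{1/k}>1$ and the crucial inequality $c_k^{p(x)}\le c_k^{k}$ on $F_k$ (which needs $c_k\le1$, since $p(x)\ge k$ there) goes the wrong way, so your upper bound for $\rho_{\pp}(f)$ collapses.

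The remedy is exactly the device you propose for the atomic case---replace the exponent $1/k$ by $1/p(x)$---and the paper applies it uniformly, with no atomic/nonatomic split. On each piece $E_n$ (chosen inside $\{p\in[2^n,2^{n+1})\}$) the paper sets
\[
f(x)=2\,\bigl(2^{-n}\mu(E_n)^{-1}\bigr)^{1/p(x)},
\]
so that $(f(x)/2)^{p(x)}=2^{-n}\mu(E_n)^{-1}$ \emph{exactly}, irrespective of how small $\mu(E_n)$ is; this gives $\rho_{\pp}(f/2)=\sum_n 2^{-n}<\infty$, while $f(x)^{p(x)}=2^{p(x)}\cdot 2^{-n}\mu(E_n)^{-1}$ picks up the factor $2^{p(x)}\ge 2^{n}$ and forces $\rho_{\pp}(f)=\infty$. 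Your atomic argument is this same idea in the special case where $p$ is constant on each piece; carrying it over verbatim to the general case closes the gap and makes the dichotomy unnecessary.
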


\begin{proof}
  For each $n \in \N$, define
  \begin{equation*}
    D_n := \{x \in \Omega : p(x) \in [2^n, 2^{n+1})\}.
  \end{equation*}
  Since $\pp$ is essentially unbounded, $D_n$ has positive measure for
  infinitely many $n$; let $\N'$ denote the set of all such $n$.  For
  each $n \in \N'$ fix a subset $E_n \subset D_n$ with
  $0<\mu(E_n) <\infty$.   (We do this  just to avoid the possibility that
  $\mu(D_n) = \infty$.)  Define a function $f$ by \begin{equation*}
    f(x) =  2\sum_{n \in \N'} \big( 2^{-n}
    \mu(E_n)^{-1}\big)^{1/p(x)}\chi_{E_n}.
  \end{equation*}
  Then $f\in \Lp$, since
  \begin{equation*}
    \int_\Omega \bigg| \frac{f(x)}{2} \bigg|^{p(x)} \, d\mu(x)
    =
    \int_\Omega \sum_{n \in \N'}  2^{-n}
    \mu(E_n)^{-1}\chi_{E_n}\,d\mu(x)
    =
    \sum_{n \in \N'}  2^{-n}
    < \infty.
  \end{equation*}
  On the other hand,
  
  \begin{equation*}
    \int_\Omega | f(x) |^{p(x)} \, d\mu(x)
    = 
    \sum_{n \in \N'} \frac{2^{-n}}{\mu(E_n)} \int_{E_n} 2^{p(x)} \,
    d\mu(x) \\
                       \geq \sum_{n \in \N'} 2^{-n} 2^n = \infty,
  \end{equation*}
  so by Lemma~\ref{characlpgerm}, $\|[f]\|_{\LpQ} \geq 1$.
Thus $\LpQ$ contains a nontrivial element.
\end{proof}

\begin{remark}
  In fact, if $\pp$ is essentially unbounded, then $\LpQ$ is infinite
  dimensional; we leave this as an exercise to the reader.
\end{remark}

\begin{remark}
  The following heuristic idea gives some intuition into how one can think of the space $\LpQ$.
  If the exponent $\pp$ is unbounded, then the way in which it diverges determines a kind of ``geometry'' of a hypothetical singular set.
  Elements of $\LpQ$ can be thought of as functions supported on this set.
  Of course, this set does not exist, and elements of $\LpQ$ are not functions.
  Nevertheless, this idea should be kept in the back of ones mind.
\end{remark}

The next two characterizations of the norm in the germ space reinforce
the intuition that $\Lp_\grm$ consists of generalized functions that
are, in some sense, supported where $\pp$ is infinite.
To prove them we first need to prove another
characterization of $\overline{\Lp_b}$.

\begin{lemma} \label{lemma:Lpb-vanish-infty}
  Given $\pp \in \Pp$ and $f\in \Lp$, $f \in \overline{\Lp_b}$ if and
  only if for any sequence of sets $\{E_k\}_{k=1}^\infty$ such
  that $E_k^c$ is $\pp$-bounded and $p_-(E_k)\rightarrow \infty$,
\begin{equation} \label{eqn:vanish1}
 \lim_{k\rightarrow \infty} \|f\id_{E_k}\|_\pp = 0. 
\end{equation}
\end{lemma}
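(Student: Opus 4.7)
The statement is an ``if and only if,'' so I will handle the two directions separately. The key observation in both cases is the following elementary fact: if $E, F \subset \Omega$ are measurable with $p_-(E) > p_+(F)$, then $\mu(E \cap F) = 0$, because a.e.\ $x \in E \cap F$ would simultaneously satisfy $p(x) \geq p_-(E) > p_+(F) \geq p(x)$, a contradiction.

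For the forward implication, assume $f \in \overline{\Lp_b}$ and let $\{E_k\}$ be any sequence with $E_k^c$ $\pp$-bounded and $p_-(E_k) \to \infty$. Given $\varepsilon > 0$, I pick $g \in \Lp_b$ with $\|f - g\|_\pp < \varepsilon$. Writing $M := p_+(\supp(g)) < \infty$, the hypothesis $p_-(E_k) \to \infty$ gives some $k_0$ such that $p_-(E_k) > M$ for all $k \geq k_0$. By the observation above, $\mu(\supp(g) \cap E_k) = 0$, hence $g \, \id_{E_k} = 0$ a.e. and
\begin{equation*}
    \|f \id_{E_k}\|_\pp \;=\; \|(f-g)\id_{E_k}\|_\pp \;\leq\; \|f-g\|_\pp \;<\; \varepsilon,
\end{equation*}
using that multiplication by $\id_{E_k}$ is a contraction on $\Lp$. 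This proves \eqref{eqn:vanish1}.

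For the reverse implication, it suffices to exhibit a single sequence $\{E_k\}$ of the stated form for which the convergence holds and which can be used to approximate $f$. I will take
\begin{equation*}
    E_k := \{x \in \Omega : p(x) > k\},
\end{equation*}
whose complement $E_k^c = \{p \leq k\}$ satisfies $p_+(E_k^c) \leq k < \infty$, and which has $p_-(E_k) \geq k \to \infty$. Setting $g_k := f \, \id_{E_k^c}$, one has $\supp(g_k) \subseteq E_k^c$, so $g_k \in \Lp_b$, and
\begin{equation*}
    \|f - g_k\|_\pp \;=\; \|f \, \id_{E_k}\|_\pp \;\longrightarrow\; 0
\end{equation*}
by hypothesis. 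Hence $f \in \overline{\Lp_b}$.

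The only subtle point is the forward direction, where one must exploit the full strength of the assumption $p_-(E_k) \to \infty$ (not merely $\mu(E_k) \to 0$) in order to force the support of an approximating $\pp$-bounded function to become disjoint from $E_k$ for large $k$; this is precisely where the elementary disjointness fact above is used.
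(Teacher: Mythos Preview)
Your proof is correct and follows essentially the same argument as the paper's. The forward direction is identical (approximate by $g\in\Lp_b$, then use $p_-(E_k)>p_+(\supp g)$ to kill $g\id_{E_k}$), and for the reverse direction you are simply more explicit than the paper in naming the sequence $E_k=\{p>k\}$, whereas the paper leaves the choice of sequence implicit.
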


\begin{proof}
Suppose first that \eqref{eqn:vanish1} holds.  Then for each $k$,
$f\id_{E_k^c} \in \Lp_b$, and so
$$\|f-f\id_{E_k^c}\|_\pp=\|f\id_{E_k}\|_\pp \rightarrow 0$$
as
$k\rightarrow \infty$.  Hence, $f\in \overline{\Lp_b}$.

Conversely, suppose $f\in \overline{\Lp_b}$.  Let
$\{h_j\}_{j=1}^\infty$ be a sequence in $\Lp_b$ that converges to
$f$.  Fix $\varepsilon>0$ and $j$ such that
$\|f-h_j\|_\pp<\varepsilon$.   Fix any sequence $\{E_k\}_{k=1}^\infty$
as in the hypotheses.  Then there exists $K$ such that if $k\geq K$,
$p_-(E_k) > p_+(\supp(h_j))$, so $\mu(\supp(h_j)\cap E_k)=0$.
Therefore,
\begin{multline*}
  \limsup_{k\rightarrow \infty} \|f\id_{E_k}\|_\pp
    \leq \limsup_{k\rightarrow \infty} \left( \|(f-h_j)\id_{E_k}\|_{\pp}
    + \|h_j\id_{E_k}\|_\pp\right) \\
    \leq \|f-h_j\|_{\pp} + \limsup_{k\rightarrow
      \infty}\|h_j\id_{E_k}\|_\pp
    = \|f-h_j\|_{\pp}  < \varepsilon. 
  \end{multline*}
  Since $\varepsilon>0$ is arbitrary, \eqref{eqn:vanish1} holds.
\end{proof}

\begin{lemma} \label{lemma:grm-norm-inf}
  {If} $\pp \in \Pp$,  then for all $f\in \Lp$,
  \[ \|[f]\|_{\Lp_\grm} = \inf\big\{ \|f\id_E\|_\pp :  E \subset
    \Omega, E^c \text{ is $\pp$-bounded} \big\}. \]
\end{lemma}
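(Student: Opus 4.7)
The plan is to prove the two inequalities separately, using Lemma~\ref{characlpgerm} as the main tool for the harder direction.

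\textbf{Upper bound.} The easy inequality is that the quotient norm does not exceed the infimum. Suppose $E\subset\Omega$ has $E^c$ $\pp$-bounded. Then $f\id_{E^c}\in \Lp_b\subset\overline{\Lp_b}$, so $f$ and $f\id_E = f - f\id_{E^c}$ represent the same class in $\LpQ$. By the definition of the quotient norm,
\[
\|[f]\|_{\LpQ} = \|[f\id_E]\|_{\LpQ} \leq \|f\id_E\|_\pp,
\]
and taking the infimum over admissible $E$ yields $\|[f]\|_{\LpQ}\leq \inf\{\|f\id_E\|_\pp:E^c \text{ is $\pp$-bounded}\}$.

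\textbf{Lower bound.} Here I would invoke Lemma~\ref{characlpgerm}. Fix $\lambda > \|[f]\|_{\LpQ}$; then $\int_\Omega |f/\lambda|^{p(x)}\,d\mu(x)<\infty$. Choose an increasing sequence $\{E_n\}$ of $\pp$-bounded sets with $\bigcup_n E_n=\Omega$ (possible since $\mu$ is $\sigma$-finite and $p(x)<\infty$ a.e.), and set $F_n:=\Omega\setminus E_n$, so that each $F_n^c=E_n$ is $\pp$-bounded. Since $\id_{F_n}|f/\lambda|^{p(\cdot)}\to 0$ pointwise a.e.\ and is dominated by the integrable function $|f/\lambda|^{p(\cdot)}$, the dominated convergence theorem gives
\[
\int_{F_n}\Big|\frac{f(x)}{\lambda}\Big|^{p(x)}\,d\mu(x) \longrightarrow 0.
\]
In particular, for all $n$ large enough this integral is $\leq 1$, i.e.\ $\rho_\pp(f\id_{F_n}/\lambda)\leq 1$, so by the definition of the Luxemburg norm, $\|f\id_{F_n}\|_\pp\leq \lambda$. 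Hence the infimum on the right is bounded by $\lambda$, and letting $\lambda\downarrow \|[f]\|_{\LpQ}$ completes the proof.

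\textbf{Anticipated obstacles.} I expect no serious difficulty. The only care needed is to justify dominated convergence (using $p(x)<\infty$ a.e., which is the standing assumption in this section) and to confirm that the $\pp$-bounded exhaustion of $\Omega$ exists, which follows from $\sigma$-finiteness by partitioning $\Omega$ into level sets $\{x:p(x)\leq k\}$ and intersecting with a $\sigma$-finite exhaustion. The key conceptual point, that the set $\{\lambda:\rho_\pp(f/\lambda)<\infty\}$ controls both characterizations, is exactly what Lemma~\ref{characlpgerm} provides, which is why that lemma is stated first.
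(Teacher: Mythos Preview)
Your argument is correct. The upper bound is exactly the paper's. For the lower bound the paper takes a different path: it invokes Lemma~\ref{lemma:Lpb-vanish-infty} (the characterization of $\overline{\Lp_b}$ as those $f$ with $\|f\id_{E_k}\|_\pp\to 0$ along sequences with $p_-(E_k)\to\infty$) and works directly with an arbitrary representative $g\in[f]$, estimating $\|f\id_{E_k}\|_\pp \leq \|(f-g)\id_{E_k}\|_\pp + \|g\id_{E_k}\|_\pp < \varepsilon + \|g\|_\pp$ before taking the infimum over $E$ and then over $g$. Your route instead cites Lemma~\ref{characlpgerm} to obtain $\rho_\pp(f/\lambda)<\infty$ and then re-runs the dominated-convergence step from that lemma's proof to trim tails. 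Your approach bypasses Lemma~\ref{lemma:Lpb-vanish-infty} entirely, which is a mild economy here; the paper's argument has the compensating advantage of working purely at the level of norms, without passing back through the modular.
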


\begin{proof}
  Given a set $E\subset \Omega$ such that $E^c$ is $\pp$-bounded, we
  have that $f\id_E \in [f]$, and so by definition,
  $\|[f]\|_{\Lp_\grm}\leq \|f\id_E\|_\pp$.    To prove the reverse
  inequality, fix $\varepsilon>0$ and take $g\in [f]$.   Let
  $\{E_k\}_{k=1}^\infty$  be a sequence as in
  Lemma~\ref{lemma:Lpb-vanish-infty}.   Fix $\varepsilon>0$; then for
  all $k$ sufficiently large, $\|(g-f)\id_{E_k}\|_\pp <
  \varepsilon$.  Hence,
  \[ \|f\id_{E_k}\|_\pp \leq \|(f-g)\id_{E_k}\|_\pp + \|g
    \id_{E_k}\|_\pp
    < \varepsilon + \|g\|_\pp. \]
  Therefore, if we take the infimum over all sets $E$ with $E^c$
  bounded, we get
  \[ \inf_E \|f\id_E\|_\pp < \varepsilon + \|g\|_\pp.  \]
  If we now take the infimum over all such $g$, we get the desired inequality.
\end{proof}

In computing the germ norm we can replace the infimum by a limit over
a particular family of sets.

\begin{lemma} \label{lemma:grm-norm-limit}
 Given $\pp \in \Pp$, let $\{E_k\}_{k=1}^\infty$ be any sequence of
 sets such that $E_k^c$ is $\pp$-bounded, $E_{k+1}\subset E_k$, and
 $p_-(E_k) \rightarrow \infty$ as $k\rightarrow \infty$.  Then
 \[ \|[f]\|_{\Lp_\grm} = \lim_{k\rightarrow \infty}
   \|f\id_{E_k}\|_\pp. \]
\end{lemma}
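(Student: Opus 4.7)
The plan is to sandwich $\lim_{k\to\infty}\|f\id_{E_k}\|_\pp$ between $\|[f]\|_{\Lp_\grm}$ on both sides, using Lemma~\ref{lemma:grm-norm-inf} for the easy direction and an approximation argument in the spirit of Lemma~\ref{lemma:Lpb-vanish-infty} for the other. A preliminary observation is that the sequence $\{\|f\id_{E_k}\|_\pp\}_{k\in\N}$ is monotonically decreasing: the nesting $E_{k+1}\subset E_k$ gives $|f\id_{E_{k+1}}|\leq |f\id_{E_k}|$ pointwise, and the Luxemburg norm respects such inequalities between nonnegative functions. Hence the limit exists and equals $\inf_k \|f\id_{E_k}\|_\pp$.

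For the inequality $\|[f]\|_{\Lp_\grm}\leq \lim_k\|f\id_{E_k}\|_\pp$, the hypothesis that each $E_k^c$ is $\pp$-bounded lets me apply Lemma~\ref{lemma:grm-norm-inf} directly with $E=E_k$, giving
\[
\|[f]\|_{\Lp_\grm} \;\leq\; \|f\id_{E_k}\|_\pp
\]
for every $k$, and then I pass to the limit. For the reverse inequality, I would fix $\varepsilon>0$ and choose $g\in\Lp_b$ with $\|f-g\|_\pp<\|[f]\|_{\Lp_\grm}+\varepsilon$, which is possible because $\Lp_b$ is dense in $\overline{\Lp_b}$. The key step, exactly as in the proof of Lemma~\ref{lemma:Lpb-vanish-infty}, is to observe that $p_+(\supp(g))<\infty$ together with $p_-(E_k)\to\infty$ forces $p_-(E_k)>p_+(\supp(g))$ for all sufficiently large $k$, and therefore $\mu(\supp(g)\cap E_k)=0$. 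Consequently $g\id_{E_k}=0$ almost everywhere for such $k$, so
\[
\|f\id_{E_k}\|_\pp \;=\; \|(f-g)\id_{E_k}\|_\pp \;\leq\; \|f-g\|_\pp \;<\; \|[f]\|_{\Lp_\grm}+\varepsilon.
\]
Letting $k\to\infty$ and then $\varepsilon\to 0$ finishes the argument.

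I do not anticipate any genuine obstacle, since once Lemma~\ref{lemma:grm-norm-inf} is available the statement is essentially a corollary: the monotonicity $E_{k+1}\subset E_k$ is used only to turn the infimum over sets into a decreasing limit, while the $\pp$-boundedness of the complements $E_k^c$ is exactly the condition that places each $E_k$ within the class over which Lemma~\ref{lemma:grm-norm-inf} takes its infimum. The one point worth verifying carefully is the vanishing of $g\id_{E_k}$ for large $k$, but that is precisely the calculation already carried out in the proof of Lemma~\ref{lemma:Lpb-vanish-infty}, so it can be invoked essentially verbatim.
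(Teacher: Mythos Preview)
Your proof is correct and follows essentially the same line as the paper's: both establish monotonicity of $\|f\id_{E_k}\|_\pp$, use Lemma~\ref{lemma:grm-norm-inf} for the inequality $\|[f]\|_{\Lp_\grm}\leq\lim_k\|f\id_{E_k}\|_\pp$, and obtain the reverse inequality from the key observation that $E_k$ eventually avoids any $\pp$-bounded set. The only cosmetic difference is that for this last step you go back to the quotient-norm definition and pick $g\in\Lp_b$ approximating the infimum, whereas the paper stays within the set formulation of Lemma~\ref{lemma:grm-norm-inf} and shows $\lim_k\|f\id_{E_k}\|_\pp\leq\|f\id_E\|_\pp$ for every $E$ with $E^c$ $\pp$-bounded; these are equivalent phrasings of the same idea.
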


\begin{remark}
  In practice we often apply Lemma~\ref{lemma:grm-norm-limit} with
  the sets $E_k := \{ x\in \Omega : p(x)\geq k \}$. 
\end{remark}

\begin{proof}
  Since the sets $E_k$ are nested, the sequence of norms is decreasing.
   Thus, the limit exists and in fact we
  have that
  \[ \inf_E \|f\id_E\|_ \pp \leq \inf_k \|f\id_{E_k}\|_ \pp
    = \lim_{k\rightarrow \infty} \|f\id_{E_k}\|_ \pp. \]

  To show the reverse inequality, fix a set $E$ such that $E^c$ is
  $\pp$-bounded.  Then for all $k$ sufficiently large, $p_-(E_k)>
  p_+(E^c)$, and so
  \begin{multline*}
    \lim_{k\rightarrow \infty} \|f\id_{E_k}\|_\pp
    \leq \lim_{k\rightarrow \infty} \big( \|f\id_{E_k\cap E}\|_\pp
    + \|f\id_{E_k\cap E^c}\|_\pp \big) \\
    \leq \lim_{k\rightarrow \infty} \|f\id_{E_k\cap E}\|_\pp
    \leq \|f\id_E\|_\pp. 
  \end{multline*}
  Since this is true for every such $E$, if we take the infimum we get
  the desired inequality.
\end{proof}

Finally, we can characterize the norm in a way analogous to the
associate norm on $L^\cpp$.

\begin{lemma} \label{lemma:grm-norm-associate}
  Given $\pp \in \Pp$,  we have for all $f\in \Lp$,
  \[ \|[f]\|_{\Lp_\grm} = \sup \bigg( \lim_{k\rightarrow \infty}
    \int_{E_k} f(x)h_k(x)\,d\mu(x) \bigg), \]
  where $E_k = \{ x\in \Omega : p(x)\geq k \}$, and the supremum is
  taken over all sequences $\{h_k\}_{k=1}^\infty$, such that
  $\supp(h_k)\subset E_k$ and $\|h_k\|_\cpp \leq 1$.  Moreover, there
  exists a sequence $\{h_k\}_{k=1}^\infty$ such that the supremum is
  attained. 
\end{lemma}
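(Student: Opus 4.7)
The plan is to combine Lemma~\ref{lemma:grm-norm-limit}, which identifies $\|[f]\|_{\Lp_\grm}$ with $\lim_{k\to\infty}\|f\id_{E_k}\|_\pp$, with the associate-norm (norm conjugate) characterization of the Luxemburg norm applied on each restricted space $\Lp(E_k)$. The quantitative observation that upgrades the usual norm equivalence to honest equality in the limit is that the effective H\"older constant on $E_k$ tends to $1$ as $k\to\infty$, since there $p_-(E_k)\to\infty$ and, because $p_+=\infty$ forces $p_+(E_k)=\infty$ for every $k$, $p'$ is essentially forced down to $1$.

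For the upper bound I would fix any admissible sequence $\{h_k\}$ and apply H\"older's inequality on $E_k$ to write
\[
\int_{E_k} f(x)\,h_k(x)\,d\mu(x) \;\leq\; K_{\pp,E_k}\,\|f\id_{E_k}\|_\pp\,\|h_k\|_\cpp \;\leq\; K_{\pp,E_k}\,\|f\id_{E_k}\|_\pp,
\]
where $K_{\pp,E_k} = 1/p_-(E_k) + 1/p'_-(E_k)$. Since $p_-(E_k)\geq k$ and $p'_-(E_k)=1$ for every $k$ under our assumption, we have $K_{\pp,E_k}\to 1$. Combining with Lemma~\ref{lemma:grm-norm-limit} and passing to the $\limsup$ yields $\limsup_k \int_{E_k} f h_k \, d\mu \leq \|[f]\|_{\Lp_\grm}$, and taking the supremum over admissible $\{h_k\}$ gives one inequality.

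For the reverse inequality and the attainment claim I would invoke the easy direction of the norm conjugate formula (cf.\ \cite[Theorem 2.34]{fiorenza-cruzuribe2013}) applied on $E_k$: for each $k$ one can pick $h_k \in L^\cpp$ supported in $E_k$ with $\|h_k\|_\cpp \leq 1$ and
\[
\int_{E_k} f(x)\,h_k(x)\,d\mu(x) \;\geq\; \|f\id_{E_k}\|_\pp - \tfrac{1}{k}.
\]
Concretely, $h_k$ may be taken proportional to $\sgn(f)\,|f/\|f\id_{E_k}\|_\pp|^{p(\cdot)-1}\id_{E_k}$, rescaled if necessary so that $\|h_k\|_\cpp\leq 1$. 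Letting $k\to\infty$ and appealing to Lemma~\ref{lemma:grm-norm-limit} gives $\lim_k \int f h_k\,d\mu \geq \|[f]\|_{\Lp_\grm}$; together with the upper bound this produces the equality and simultaneously exhibits a sequence that attains the supremum.

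The main obstacle is the constant in H\"older's inequality. In general the associate and Luxemburg norms on $\Lp$ only agree up to a multiplicative $K_\pp > 1$, so a direct application of the norm conjugate formula would yield only a two-sided equivalence rather than the clean equality asserted. The essential sharpening is that restricting to $E_k$ drives $p$ to infinity and hence $p'$ down to $1$, which pulls $K_{\pp,E_k}$ to $1$ in the limit; making this limit quantitative and aligning it with the choice of nearly-optimal $h_k$ is the delicate step of the argument.
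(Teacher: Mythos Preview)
Your proposal is correct and follows essentially the same approach as the paper: both combine Lemma~\ref{lemma:grm-norm-limit} with the associate-norm characterization from \cite[Theorem~2.34]{fiorenza-cruzuribe2013}, exploiting that the H\"older constant on $E_k$ tends to $1$ as $k\to\infty$. Your constant $1/p_-(E_k)+1/p'_-(E_k)$ agrees with the paper's $1/p_-(E_k)+1/p_+(E_k)+1$ (both equal $1/k+1$ here), and the paper simply invokes the existence of near-optimal $h_k$ from the associate-norm theorem rather than writing down your explicit candidate.
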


\begin{proof}
We use the precise constants for the associate norm
in $\Lp$~\cite[Theorem~2.34]{fiorenza-cruzuribe2013}:  for each $k>1$,
\[ \|f\id_{E_k}\|_\pp
    \leq 
\sup \bigg\{ \bigg|\int_{E_k} f(x)h_k(x)\,d\mu(x)\bigg| :
\|h_k\|_{\cpp}\leq 1 \bigg\} 
  \leq K_\pp(E_k) \|f\|_\pp, \]
where
\[ K_\pp(E_k) = \frac{1}{p_-(E_k)} + \frac{1}{p_+(E_k)}+1 \leq
    \frac{1}{k}+1. \]
The desired inequality follows from Lemma~\ref{lemma:grm-norm-limit}
if we take the limit as $k\rightarrow \infty$.   Furthermore, since we can
choose a sequence $\{h_k\}_{k=1}^\infty$ such that for each $k$
\[  \|f\id_{E_k}\|_\pp \leq \left(\frac{1}{k}+1\right)
    \bigg|\int_{E_k} f(x)h_k(x)\,d\mu(x)\bigg|, \]
  we can also find a sequence such that the supremum is attained.
 \end{proof}

Motivated by Lemma~\ref{lemma:grm-norm-associate} we can construct an
explicit collection of elements in $(\Lp_\grm)^*$.

\begin{proposition} \label{prop:dual-banach-limit}
  Given $\pp \in \Pp$ let $\{E_k\}_{k=1}^\infty$ be any sequence as in
  Lemma~\ref{lemma:grm-norm-limit}, and let $\{g_k\}_{k=1}^\infty$ be
  any sequence of functions $L^\cpp$ such that $\supp(g_k)\subset E_k$
  and
  \[ \limsup_{k\rightarrow\infty}\|g_k\|_\cpp = K > 0. \]
Then
  there exists $T\in (\Lp_\grm)^*$ such that $\|T\|_{ (\Lp_\grm)^*}
  \leq K$ and for all $f\in \Lp$,
  \begin{equation} \label{eqn:Banach1}
    T([f]) = \lim_{k\rightarrow \infty} \int_{E_k}
      f(x)g_k(x)\,d\mu(x), 
    \end{equation}
    whenever this limit exists.
  \end{proposition}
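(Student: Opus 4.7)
The plan is to construct $T$ by applying a Banach limit to the sequence of integrals
$$a_k(f) := \int_{E_k} f(x) g_k(x)\,d\mu(x), \quad k \in \N.$$
First, I would verify that for every fixed $f \in \Lp$ the sequence $(a_k(f))_k$ is bounded. Using the sharp form of H\"older's inequality restricted to $E_k$ (exactly as exploited in the proof of Lemma~\ref{lemma:grm-norm-associate}), I obtain
$$|a_k(f)| \leq K_\pp(E_k)\,\|f\id_{E_k}\|_\pp\,\|g_k\|_\cpp \leq \Big(\tfrac{1}{k}+1\Big)\|f\|_\pp\|g_k\|_\cpp.$$
Since $\limsup_k \|g_k\|_\cpp = K < \infty$, the sequence $(\|g_k\|_\cpp)_k$ is bounded past some index, and hence so is $(a_k(f))_k$.

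I would then fix a Banach limit $L$ on $\ell^\infty(\N)$, whose existence follows from Hahn-Banach, and define $T(f) := L((a_k(f))_k)$. Linearity in $f$ is immediate because $f \mapsto (a_k(f))_k$ is linear and $L$ is linear.

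The key step is to verify that $T$ descends to the quotient $\Lp_\grm$, i.e.\ that $T(g) = 0$ for every $g \in \overline{\Lp_b}$. Here Lemma~\ref{lemma:Lpb-vanish-infty} does the work: since each $E_k^c$ is $\pp$-bounded and $p_-(E_k) \to \infty$, the lemma yields $\|g\id_{E_k}\|_\pp \to 0$, and the H\"older bound above then forces $a_k(g) \to 0$. Because Banach limits extend the usual limit, $L((a_k(g))_k) = 0$, so $T$ factors through $\Lp/\overline{\Lp_b} = \Lp_\grm$; I retain the same name $T$ for the induced functional.

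For the norm estimate I would use the Banach-limit property $|L((b_k))| \leq \limsup_k |b_k|$ together with Lemma~\ref{lemma:grm-norm-limit}. Since $K_\pp(E_k) \to 1$ and $\|f\id_{E_k}\|_\pp \to \|[f]\|_{\Lp_\grm}$, passing to the limsup in the H\"older bound yields
$$|T([f])| \leq \limsup_k |a_k(f)| \leq \|[f]\|_{\Lp_\grm} \cdot \limsup_k \|g_k\|_\cpp = K\|[f]\|_{\Lp_\grm},$$
so $\|T\|_{(\Lp_\grm)^*} \leq K$. Consistency with \eqref{eqn:Banach1} is automatic: a Banach limit agrees with the ordinary limit whenever the latter exists. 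The only delicate point is the well-definedness on the quotient, and as noted this reduces directly to Lemma~\ref{lemma:Lpb-vanish-infty}.
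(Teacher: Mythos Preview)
Your proof is correct and follows essentially the same approach as the paper: apply a Banach limit to the sequence of integrals, verify well-definedness on the quotient by showing the integrals vanish on $\overline{\Lp_b}$, and obtain the norm bound from the $\limsup$ property of Banach limits together with Lemma~\ref{lemma:grm-norm-limit}. The paper phrases the well-definedness step via Lemma~\ref{lemma:grm-norm-limit} (comparing two representatives of the same class) rather than Lemma~\ref{lemma:Lpb-vanish-infty}, but these are equivalent observations.
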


  \begin{proof}
    Let $\Phi\in \ell^\infty(\N)^*$ be a Banach limit.
    Intuitively,
    $\Phi$ is a 
    positive, bounded linear functional on $\ell^\infty$ which is gotten as an
    extension (via the Hahn-Banach theorem) from a linear functional
    on the subspace of convergent sequences. In particular, $\Phi$ is
    such that  for every $x=\{x_k\}_{k=1}^\infty\in \ell^\infty$, if
    $x_k\geq 0$ for all $k$, then $\Phi(x)\geq 0$, and if {$\{x_k\}_{k=1}^\infty$} is
    a convergent sequence, then
    \begin{equation} \label{eqn:Banach2}
       \Phi({\{x_k\}_{k=1}^\infty}) =\lim_{k\rightarrow \infty} x_k.  
     \end{equation}
    (See~\cite[II.4.21-22]{MR1009162}.)

    We now define
    \[ T([f]) := \Phi\bigg( \bigg\{ \int_{E_k}
      f(x)g_k(x)\,d\mu(x)\bigg\}_{k=1}^\infty \bigg).  \]
    The linear functional is well defined.  {By H\"older's inequality,
      the sequence inside $\Phi$ is bounded.  } Moreover,  if $h\in [f]$, then
    by Lemma~\ref{lemma:grm-norm-limit},
    \begin{multline*}
      \lim_{k\rightarrow \infty} \int_{E_k}
      |f(x)-h(x)||g_k(x)|\,d\mu(x) \\
      \leq \lim_{k\rightarrow \infty} \left(1+\frac{1}{p_-(E_k)}\right)\|(f-h)\id_{E_k}\|_\pp
      \|g_k\|_\cpp = 0. 
    \end{multline*}
    As a consequence, $T([f])=T([{h}])$.  The limit~\eqref{eqn:Banach1} follows at once from property~\eqref{eqn:Banach2}.
    Finally, essentially the same
    argument as above with $h=0$ shows that $\|T\|_{ (\Lp_\grm)^*}
    \leq K$.
  \end{proof}

  \begin{remark}
    The set of functionals $T = T(\Phi,\{g_k\}_{k=1}^\infty)$
    constructed in the proof of Proposition
    \ref{prop:dual-banach-limit} is norming for $\LpQ$: that is, for
   each $[f] \in \LpQ$, there exists  $T = T(\Phi;\{g_k\}_{k=1}^N)$
   with norm 1 such that
    \begin{equation*}
      T([f]) = \|[f]\|_{\LpQ}.
    \end{equation*}
    Thus, this set of functionals is relatively large in $(\LpQ)^*$.
    Fix $[f] \in \LpQ$; to find such a functional $T$, note that by
    Lemma \ref{lemma:grm-norm-associate} there exists a sequence
    $\{g_k\}_{k=1}^\infty$ such that $\supp(g_k) \subset E_k$, 
    $\|g_k\|_{\cpp} \leq 1$, and
    \begin{equation*}
      \lim_{k \to \infty} \int_{E_k} f(x) g_k(x) \, d\mu(x) = \|[f]\|_{\LpQ}.
    \end{equation*}
    Let $\Phi$ be any Banach limit, and define $T =
    T(\Phi, \{g_k\}_{k=1}^\infty)$; then  by~\eqref{eqn:Banach2} we have that
    \begin{equation*}
      T([f]) = \Phi\bigg( \bigg\{ \int_{E_k}
      f(x)g_k(x)\,d\mu(x)\bigg\}_{k=1}^\infty \bigg) = \|[f]\|_{\LpQ}.
    \end{equation*}
  \end{remark}

  \begin{remark}
    In Proposition~\ref{prop:dual-banach-limit}, the functional $T =
    T(\Phi, \{g_k\}_{k=1}^\infty)$ depends on the choice of the Banach limit $\Phi$, and since
    (by the Hahn-Banach theorem) there are an infinite number of such
    objects, $T$ is not unique.
    If we fix $\Phi$ and
    take a second {uniformly bounded} sequence $\{h_k\}_{k=1}^\infty$, it is unknown when
    this sequence will induce the same linear functional on
    $\Lp_\grm$.  If
\[ \lim_{k\rightarrow \infty}  a_k := \lim_{k\rightarrow \infty} \|(g_k-h_k)\id_{E_k}\|_\cpp = 0, \]
then they induce the same functional.   This follows from the same
    argument as at the end of the proof of
    Proposition~\ref{prop:dual-banach-limit}.  

    More generally, we have that this is also the case if
    $\{a_k\}_{k=1}^\infty \in \mathrm{Ker}(\Phi)$.  (If
    $a_k\rightarrow 0$ as $k\rightarrow \infty$, then $\{a_k\} \in
    \mathrm{Ker}(\Phi)$ but the converse need not be true.)
   To see this, let $T_1$ and $T_2$ be the functionals induced by
   $\{g_k\}_{k=1}^\infty$ and $\{h_k\}_{k=1}^\infty$, respectively.
   Then since $\Phi$ is a positive linear functional, and by H\"older's
   inequality in the variable Lebesgue spaces,
   \begin{multline*}
     |T_1(f)-T_2(f)| = \bigg| \Phi\bigg( \bigg\{ \int_{E_k} f(x)\big(
     g_k(x)-h_k(x)\big)\,d\mu(x) \bigg\}_{k=1}^\infty\bigg) \bigg|\\
     \leq \Phi\bigg( \bigg\{ \int_{E_k}\big| f(x)\big(
     g_k(x)-h_k(x)\big)\big|\,d\mu(x) \bigg\}_{k=1}^\infty\bigg)
     \leq \Phi(\{ C_ka_k \|f\id_{E_k}\|_{\Lp}\}_{k=1}^\infty).
   \end{multline*}
   We claim that the last term is equal to $0$.  First, clearly the
   sequence $\{a_k\}_{k=1}^\infty$ is bounded.  Second, as we noted
   above, $C_k\rightarrow 1$ since $p_-(E_k)\rightarrow \infty$.
   Therefore, by Lemma~\ref{lemma:grm-norm-limit}, 
   \[ \lim_{k\rightarrow \infty} a_k
       (C_k\|f\id_{E_k}\|_{\Lp}-\|[f]\|_{\Lp_\grm}) = 0.  \]
     Hence, again by the linearity of $\Phi$,
     \[ \Phi(\{ C_ka_k \|f\id_{E_k}\|_{\Lp}\}_{k=1}^\infty) =
       \|[f]\|_{\Lp_\grm} \Phi(\{a_k\}_{k=1}^\infty)  = 0.  \]

     While this condition is sufficient, we do not know if it is
     necessary, and the problem of characterizing these linear
     functionals remains open.
 \end{remark}
  
  \medskip

  We conclude this section with two examples that illustrate the
  pathological behavior of the germ space.   It was originally
  conjectured that $\Lp_\grm$ would behave like $L^\infty$,
  particularly in the special case when $L^\infty \subset \Lp$.
  Indeed, we initially hoped that in this case
  \[ \lim_{k\rightarrow \infty} \|f\id_{E_k}\|_{\pp}
    = \lim_{k\rightarrow \infty}
    \|f\id_{E_k}\|_{\infty}\|\id_{E_k}\|_\pp. \]
(The limit on the right-hand side exists because both sequences are
decreasing.)

  However, this is false:  the $\Lp_\grm$ norm of an $L^\infty$
  function can be arbitrarily smaller than its $L^\infty$ norm.  Let
  $\Omega=[1,\infty)$ and let $p(x)=\lfloor x \rfloor$.  For each $n>1$
  define the sets
  \[ F_n = \bigcup_{k=1}^\infty [k,k+n^{-k}].  \]
  Then $|F_n|<\infty$.  Furthermore, we have that
  \[ \int_1^\infty \big( n\id_{F_n})^{p(x)}\,dx
    =
    \sum_{k=1}^\infty n^k\cdot n^{-k} = \infty. \]
On the other hand, for any $\lambda>1/n$, 
  \[ \int_1^\infty \big( \lambda^{-1}\id_{F_n})^{p(x)}\,dx
    =
    \sum_{k=1}^\infty \lambda^{-k}\cdot n^{-k} < \infty, \]
  so if we truncate this integral to be on the interval $[k,\infty)$,
  it will be less than $1$ for all $k$ large.  Hence, by
  Lemma~\ref{lemma:grm-norm-limit},
  \[ \|\id_{F_n}\|_{\Lp_\grm}
    =
    \lim_{k\rightarrow \infty}\|\id_{F_n}\id_{[k,\infty)}\|_\pp =
    \frac{1}{n},\]
  but we clearly have
  \[ \lim_{k\rightarrow \infty}\|\id_{F_n}\id_{[k,\infty)}\|_\infty = 1. \]

  \medskip

  Moreover, we can modify this example to show that $\Lp_\grm$
  contains functions that are unbounded at infinity.   Define
  \[ f(x) = \sum_{k=1}^\infty k \id_{[k,k+k^{-k})}(x).  \]
  Then for all $\lambda>1$,
  \[ \int_1^\infty \bigg|\frac{f(x)}{\lambda}\bigg|^{p(x)}\,dx
    =
    \sum_{k=1}^\infty \frac{k^k}{\lambda^k}k^{-k} < \infty, \]
  but for all $\lambda \leq 1$ this integral diverges.  Hence $f\in
  \Lp(\Omega)$, and $\|f\|_{\Lp_\grm}\geq 1$.  

\section{Variable sequence spaces}\label{section2}

In this section we explore the structure of the germ
space and its dual in the case of the variable sequence spaces $\ell^\pp$, that is, when
$\Omega=\mathbb{N}$, $\mathcal{A}=\mathcal{B}(\N)$ and $\mu$ {is} the
counting measure.  In this case the fact that $\Omega$ is countable
and $\pp$ can be unbounded only at infinity greatly simplif{ies} the
situation.  But even in this special case we will see that the dual of
the germ space is still very complicated unless we impose additional
restrictions on the exponent function.   The sequence spaces have been
much less studied than their continuous counterparts; for the known
results, primarily for bounded exponents,
see~\cite{MR1899091,MR1931232,nekvindaP}. 

We fix some notation specific to this setting.  Given a sequence
$x=\{x(k)\}_{k=1}^\infty$, 
the modular $\rho_\pp(\cdot)$ and  norm  $\| \cdot \|_{\lp}$ are given
by 
\begin{equation*}
  \rho_{{\pp}}(f)= \sum_{k\in \mathbb{N}} \left| x(k)\right|^{p(k)}, \qquad
  \| x \|_{\lp}
  =
  \inf \bigg\{\lambda >0:
  \sum_{k\in \mathbb{N}} \left| \frac{x(k)}{\lambda}\right|^{p(k)} \leq 1 \bigg\}.
\end{equation*}

We consider three cases, which intuitively correspond to how
``far'' $\lp$ is from $\ell^\infty$.

\subsection{$\lp$ equals $\ell^\infty$}

We start with the observation that $\lp {\subseteq} \ell^\infty$,  as is
the case for the classical sequence spaces $\ell^p$.

\begin{lemma} \label{easyellpinclusion}
  Given any $\pp \in \Pp(\N)$, for all $x\in \lp$, 
\begin{equation*} 
  \| x \|_{\ell^\infty} \leq \| x \|_{\lp}.
\end{equation*}
\end{lemma}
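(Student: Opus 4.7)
The plan is to read the inequality directly off the definition of the Luxemburg norm, using the fact that in the sequence setting with the counting measure the modular is a sum of nonnegative terms, so each term is individually controlled by the total. The standing assumption $\mu(\Omega_\infty)=0$ means in the $\N$ case that $p(k)<\infty$ for every $k$, so no separate $L^\infty$ piece appears in the modular; this is convenient.

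Concretely, I would fix an arbitrary $\lambda > \|x\|_{\lp}$ and use that, by definition of the infimum, $\rho_\pp(x/\lambda) \leq 1$, i.e.
\[
\sum_{k \in \N} \bigg|\frac{x(k)}{\lambda}\bigg|^{p(k)} \leq 1.
\]
Since every summand is nonnegative, each individual term satisfies $|x(k)/\lambda|^{p(k)} \leq 1$. Because $p(k) \in [1,\infty)$, the monotonicity of $y \mapsto y^{p(k)}$ on $[0,\infty)$ together with $y^{p(k)} \leq 1$ forces $y \leq 1$, giving $|x(k)| \leq \lambda$ for every $k$. Taking the supremum over $k$ yields $\|x\|_{\ell^\infty} \leq \lambda$, and letting $\lambda \downarrow \|x\|_{\lp}$ gives the claim.

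There is no genuine obstacle here; the only minor subtlety is whether one needs $\rho_\pp(x/\|x\|_{\lp}) \leq 1$ at the endpoint, and I sidestep this by working throughout with $\lambda$ strictly larger than the norm and passing to the infimum only at the very end. This also explains why the assumption $p(k)\geq 1$ (as opposed to $p(k)>0$) is used, so that the pointwise step $y^{p(k)}\leq 1 \Rightarrow y\leq 1$ is valid. The result should be viewed as a sanity check mirroring the classical inclusion $\ell^p \subset \ell^\infty$ with norm contraction, and it will serve as the starting point for analyzing when $\lp = \ell^\infty$ in the subsection that follows.
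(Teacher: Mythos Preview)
Your proof is correct and follows essentially the same route as the paper's: fix $\lambda > \|x\|_{\lp}$, use the modular bound to get $|x(k)/\lambda|^{p(k)} \leq 1$ termwise, conclude $|x(k)| \leq \lambda$ for all $k$, and pass to the infimum. One minor remark: your aside that $p(k) \geq 1$ is needed for the step $y^{p(k)} \leq 1 \Rightarrow y \leq 1$ is not quite right---this implication holds for any $p(k) > 0$---but this does not affect the validity of the argument.
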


\begin{proof}
  Fix $x \in \lp$ and  $\lambda>\|x\|_{{\lp}}$.  Then
\[     \sum_{k\in \N} \left| \frac{x(k)}{\lambda}\right|^{p(k)} \leq 1, \]
and so for every\ $k\in \N$,
 $|x(k)|^{{p(k)}} \leq \lambda^{p(k)}$.
  Therefore, $\| x \|_{\ell^\infty}\leq \lambda$, and taking the
  infimum over all such $\lambda$ we get the desired inequality.
\end{proof}

If we had the reverse inclusion, then we would have $\ell^\pp$
isomorphic to $\ell^\infty$, and we could use the classical
description of the dual of $\ell^\infty$.  This situation, however, is very
easy to characterize.

\begin{lemma} \label{characterizationellp}
  Given $\pp\in \Pp(\N)$,   the following are equivalent:
  \begin{enumerate}
  \item $\ell^\infty {\subseteq} \ell^\pp$ and   for every $f\in
    \ell^\infty$, $\| f \|_{\lp} \leq C_{p(\cdot)} \| f \|_{\ell^\infty}$.
  \item There exists $B>1$ such that
    \begin{equation} \label{eqn:char-ellp}
      \sum_{k\in \N} B^{-p(k)} < \infty. 
    \end{equation}

  \item $\id_{\N} \in \lp$. 
  \end{enumerate}
\end{lemma}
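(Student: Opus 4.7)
My plan is to prove the equivalence via the cycle (1) $\Rightarrow$ (3) $\Rightarrow$ (2) $\Rightarrow$ (1). The first implication is immediate: the constant sequence $\id_{\N}$ is an element of $\ell^\infty$, so if $\ell^\infty \subseteq \lp$ we trivially get $\id_{\N} \in \lp$.

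For (3) $\Rightarrow$ (2), I would simply unfold the definition. If $\id_{\N} \in \lp$, then by definition there exists $\lambda > 0$ with
\[
\sum_{k \in \N} \lambda^{-p(k)} = \rho_\pp(\id_{\N}/\lambda) < \infty.
\]
Since every $p(k) \geq 1$, if $\lambda \leq 1$ each term $\lambda^{-p(k)} \geq 1$ and the sum diverges. Hence $\lambda > 1$, and setting $B := \lambda$ gives \eqref{eqn:char-ellp}.

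The substantive direction is (2) $\Rightarrow$ (1). Given $B > 1$ with $S_B := \sum_k B^{-p(k)} < \infty$, my first task is to upgrade $B$ so that the tail sum can be made $\leq 1$. The key observation is that for $B' \geq B$,
\[
B'^{-p(k)} \leq B^{-p(k)} \qquad \text{for every } k,
\]
and $B'^{-p(k)} \to 0$ pointwise as $B' \to \infty$ (since $p(k) \geq 1$). By the dominated convergence theorem (for series), $\sum_k B'^{-p(k)} \to 0$ as $B' \to \infty$, so there exists some $B_\pp \geq B$ with $\sum_k B_\pp^{-p(k)} \leq 1$. Now for any $f \in \ell^\infty$ with $M := \|f\|_{\ell^\infty}$, choosing $\lambda = B_\pp M$ we estimate
\[
\rho_\pp(f/\lambda) = \sum_k \left| \frac{f(k)}{B_\pp M}\right|^{p(k)} \leq \sum_k B_\pp^{-p(k)} \leq 1,
\]
which shows $f \in \lp$ with $\|f\|_{\lp} \leq B_\pp \|f\|_{\ell^\infty}$, yielding the bounded inclusion with $C_\pp = B_\pp$.

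The only nontrivial step is the dominated-convergence argument that allows us to pass from ``some $B > 1$ makes the sum finite'' to ``some $B_\pp$ makes the sum $\leq 1$''; without this adjustment, one only gets $f \in \lp$ but not a bound on the norm. Everything else is a direct application of the definitions of the modular and the Luxemburg norm.
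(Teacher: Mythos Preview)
Your proof is correct and uses the same ingredients as the paper's: the trivial test on $\id_\N$, the dominated convergence argument to upgrade $B$ so that the series is at most $1$, and the pointwise bound $|f(k)|\le \|f\|_{\ell^\infty}$. The only difference is the direction of the cycle---the paper runs $(1)\Rightarrow(2)\Rightarrow(3)\Rightarrow(1)$ while you run $(1)\Rightarrow(3)\Rightarrow(2)\Rightarrow(1)$---so the approaches are essentially the same.
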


\begin{proof}
  To prove that  $(1)$ implies $(2)$, note that
$$
\| \id_{\N} \|_{\lp} \leq C_{p(\cdot)} \| \id_{\N} \|_{\ell^\infty} = C_{p(\cdot)},
$$
and so by the definition of the norm,
$$
\sum_{k\in\mathbb{N}} \frac{1}{ C_{p(\cdot)}^{p(k)}} \leq 1 < \infty.
$$

If $(2)$ holds, then by the dominated convergence theorem with respect
to counting measure, if we take $B$ sufficiently large we have that 
$$
\sum_{k \in \mathbb{N}} B^{-p(k)} \leq 1.
$$
Therefore, by the definition of the norm in $\ell^\pp$,
$\|\id_\N\|_\lp \leq B$.  Thus, $(3)$ holds. 

Finally, if $(3)$ is true, let
$C_{p(\cdot)} = \|\id_\N \|_{\ell^{p(\cdot)}}$. Then for any 
$x \in \ell^\infty$ and $k\in \N$, 
$|x(k)| \leq \|x\|_{\ell^\infty} \id_\N $.   Hence,  
$$
 \|x\|_{\ell^{p(\cdot)}} =  \| |x| \|_{\ell^{p(\cdot)}} \leq
 \|x\|_{\ell^\infty} \|\id_\N \|_{\ell^{p(\cdot)}} = C_{p(\cdot)}
 \|x\|_{\ell^\infty},
 $$
 and so $(1)$ holds.
\end{proof}

\begin{remark}
  When $r<\infty$, Nekvinda~\cite{MR1931232} characterized the exponents $\pp$ such
  that $\ell^\pp$ is isomorphic to $\ell^r$.
  Lemma~\ref{characterizationellp} extends his result to the
  case $r=\infty$. 
\end{remark}

\medskip

From the previous two lemmas we get the following characterization.

\begin{corollary}\label{p-inf-equivalence}
Given $\pp\in \Pp(\N)$, $\ell^\pp$ is isomorphic {to} $\ell^\infty$ if and
only if for some $B>1$, \eqref{eqn:char-ellp} holds.  
  Furthermore, in this case, $\ell^{\pprime}$ is isomorphic to  $\ell^1$.
  \end{corollary}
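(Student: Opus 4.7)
The plan is to derive the first equivalence directly from Lemmas~\ref{easyellpinclusion} and~\ref{characterizationellp}, and then to prove $\ell^{\pprime}\cong\ell^1$ by a short H\"older-type argument, exploiting that $p'(k)\geq 1$ everywhere.

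For the equivalence, Lemma~\ref{easyellpinclusion} already supplies the continuous inclusion $\ell^\pp \hookrightarrow \ell^\infty$ with norm $1$, with no hypothesis on $\pp$. If~\eqref{eqn:char-ellp} holds, the implication $(2)\Rightarrow(1)$ of Lemma~\ref{characterizationellp} supplies the reverse continuous inclusion $\ell^\infty \hookrightarrow \ell^\pp$ with norm bounded by some $C_\pp$, so the two spaces coincide as sets with equivalent norms. Conversely, if $\ell^\pp$ is isomorphic to $\ell^\infty$ (read, as is natural in this sequence-space setting, as equality of underlying sets with equivalent norms), then in particular $\id_\N \in \ell^\pp$, and the implication $(3)\Rightarrow(2)$ of Lemma~\ref{characterizationellp} recovers~\eqref{eqn:char-ellp}.

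For the ``furthermore'' part, assume~\eqref{eqn:char-ellp} holds. To prove $\ell^1 \hookrightarrow \ell^{\pprime}$, I would use the pointwise bound $|t|^{p'(k)}\leq |t|$ valid whenever $|t|\leq 1$ (since $p'(k)\geq 1$ for every $k$). Rescaling $y\in\ell^1$ by $\lambda=\|y\|_1$ then gives $\rho_{\pprime}(y/\lambda)\leq \|y/\lambda\|_1 = 1$, so $\|y\|_{\pprime}\leq \|y\|_1$. For the reverse inclusion, I would apply the generalized H\"older inequality~\eqref{eqn:holder} with $f=\id_\N$, which lies in $\ell^\pp$ by Lemma~\ref{characterizationellp}:
\[ \|y\|_1 \;=\; \sum_{k\in\N} |y(k)\cdot 1|
 \;\lesssim\; \|\id_\N\|_{\ell^\pp}\,\|y\|_{\lpprime}, \]
yielding $\ell^{\pprime}\hookrightarrow\ell^1$ continuously. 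Combining the two directions gives $\ell^{\pprime}=\ell^1$ with equivalent norms.

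There is no serious obstacle: the argument is essentially bookkeeping once the two preceding lemmas are in hand. The only mild point to pin down is the interpretation of ``isomorphic'' as equality of sets with equivalent norms (via the identity map on sequences), which is the reading consistent with the formulation of Lemma~\ref{characterizationellp}; once that is granted, each step reduces to a one-line invocation of a previous result or a pointwise inequality.
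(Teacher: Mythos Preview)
Your argument is correct. The first equivalence is handled exactly as in the paper, by combining the two preceding lemmas.

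For the ``furthermore'' clause the paper takes a slightly different route: it invokes the associate norm characterization of $\|\cdot\|_{\pprime}$ (cited from \cite{diening-harjulehto-hasto-ruzicka2010}) to obtain both inequalities at once, since $\ell^\pp$ and $\ell^\infty$ have equivalent norms implies that the suprema defining $\|f\|_{\ell^{\pprime}}$ and $\|f\|_{\ell^1}$ are comparable. Your approach is more hands-on: the inclusion $\ell^{\pprime}\hookrightarrow\ell^1$ via H\"older with $g=\id_\N$ is effectively one half of the associate norm argument, but your inclusion $\ell^1\hookrightarrow\ell^{\pprime}$ through the modular inequality $|t|^{p'(k)}\leq|t|$ for $|t|\leq1$ is a genuinely different, more elementary step that avoids citing the associate norm theorem. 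The paper's proof is a single symmetric line; yours is self-contained but splits into two asymmetric pieces. Both are perfectly valid.
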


  \begin{proof}
    The isomorphism of $\ell^\pp$ and $\ell^\infty$ is immediate.  The
    second follows from the associate space characterization of the
    norm.  (For $\ell^1$ this is classical; for $\ell^\pp$
    see~\cite[Corollary~3.2.14]{diening-harjulehto-hasto-ruzicka2010}.)
    For all $f \in \ell^{\pprime}$ we have
    \begin{equation*}
      \|f\|_{\ell^{\pprime}}
      = \sup_{\substack{g \in \lp \\ \|g\|_{\ell^\pp} \neq 0}}
      \bigg|\sum_{n \in \N} f(n)g(n)\bigg|\|g\|_{\lp}^{-1}
      \simeq \sup_{\substack{g \in \ell^\infty \\ \|g\|_{\ell^\infty}
          \neq 0} }
      \bigg|\sum_{n \in \N} f(n)g(n)\bigg|\|g\|_{\ell^\infty}^{-1} = \|f\|_{\ell^1}.
    \end{equation*}
\end{proof}

In this special case, we can {easily} characterize the dual of $\ell^\pp$,
since it will be isomorphic to the dual of $\ell^\infty$, and
in particular, we can immediately identify the dual of the germ space
$\ell^\pp_\grm$.   The dual of $\ell^\infty$ is isomorphic to a space
of finitely additive measures~\cite{yosida-hewitt1952}.   More precisely,
\begin{equation*}
  \left(\linf \right)^* {\cong} \ba(\B(\N)),
\end{equation*}
where $\B(\N)$ is the $\Sigma$-algebra of subsets of $\N$ and
$\ba(\B(\N))$ is the set of finitely additive signed measures
$\mu$ on $\B(\N)$ with $|\mu|(\N) < \infty$.  The 
dual pairing can be identified with an integral:  
there exists an isomorphism
$\psi \colon (\linf)^* \to \ba(\B(\N))$ such that for all
$\phi \in (\linf)^*$ and $ x\in \linf$,
\begin{equation*}
  \phi(x) = \int_\N x \, d\psi(\phi).
\end{equation*}
Moreover,  $\ba(\B(\N))$ is isomorphic to the direct sum
$\ell^1 \oplus pba(\B(\N))$, 
where sequences in $\ell^1$ are identified with countably additive
measures on $\B(\N)$ and 
$pba(\B(\N))$ is the space of purely finitely additive measures
on $\B(\N)$.  (Recall that {a} measure $\mu$ is purely finitely additive if
$\mu(E)=0$ for all finite subsets $E\subset \N$.)  

If we combine these observations with
Theorem~\ref{eqn:prime-functional} and
Proposition~\ref{p-inf-equivalence}, we get the following
characterization of the dual of $\ell^\pp$.

\begin{theorem} \label{thm:dual-ellp-case1}
  Given $\pp \in \Pp(\N)$, suppose that for some $B>1$,
  \eqref{eqn:char-ellp} holds.  Then  $(\lp)^*$ is isomorphic to {the external direct sum}
\begin{equation*}
\ell^{\pprime} \oplus pba(\B(\N)).  
\end{equation*}
In particular, in this case we have that $(\lp_\grm)^*$ is isomorphic to
$pba(\B(\N))$.
\end{theorem}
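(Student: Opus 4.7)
The plan is to bootstrap the general dual splitting of Theorem~\ref{dual-splitting} using the very concrete isomorphism $\lp\cong\ell^\infty$ furnished by Corollary~\ref{p-inf-equivalence}, together with the classical Yosida--Hewitt decomposition of $(\ell^\infty)^*$.

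First I would unpack condition~\eqref{eqn:char-ellp}: summability of $\{B^{-p(k)}\}$ forces $p(k)\to\infty$, and in fact implies that $\{k\in\N:p(k)\le M\}$ is finite for every $M$. Hence a set $E\subseteq\N$ is $\pp$-bounded if and only if $E$ is finite, so $\lp_b$ is exactly the space $c_{00}$ of finitely supported sequences. Next, I would note that the isomorphism $\lp\cong\ell^\infty$ can in fact be taken to be the identity map on sequences: by Lemma~\ref{easyellpinclusion} we have $\lp\subseteq\ell^\infty$ with $\|\cdot\|_{\ell^\infty}\le\|\cdot\|_{\lp}$, and by Lemma~\ref{characterizationellp}(1) the reverse inclusion holds with $\|\cdot\|_{\lp}\le C_\pp\|\cdot\|_{\ell^\infty}$. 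Since the two norms are equivalent on this common underlying vector space, the closure of $c_{00}$ in $\lp$ coincides set-theoretically with its closure in $\ell^\infty$, which is the classical space $c_0$. Thus $\overline{\lp_b}\cong c_0$, and passing to the quotient gives $\lp_\grm \cong \ell^\infty/c_0$.

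Dualizing, $(\lp_\grm)^*\cong(\ell^\infty/c_0)^*\cong c_0^\perp$, the annihilator of $c_0$ inside $(\ell^\infty)^*\cong\ba(\B(\N))$. Under the Yosida--Hewitt identification a finitely additive measure $\mu\in\ba(\B(\N))$ annihilates every element of $c_{00}$ (and hence, by continuity, all of $c_0$) if and only if $\mu(\{k\})=0$ for every $k\in\N$, which by finite additivity is equivalent to $\mu$ being purely finitely additive. Therefore $(\lp_\grm)^*\cong pba(\B(\N))$, which proves the ``in particular'' statement. Substituting this into the general decomposition $(\lp)^*\cong\ell^{\pprime}\oplus(\lp_\grm)^*$ of Theorem~\ref{dual-splitting} yields the main claim.

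The main obstacle is not the abstract dualisation but rather the identification of subspaces: one has to verify that, under the isomorphism of Banach spaces, $\overline{\lp_b}$ corresponds precisely to $c_0$ and not merely to some Banach-isomorphic copy of it. Were one to argue purely abstractly from ``$\lp\cong\ell^\infty$ hence $(\lp)^*\cong\ell^1\oplus pba$'' together with ``$\ell^{\pprime}\cong\ell^1$'', one could not cleanly identify which summand of the resulting decomposition should be labelled $(\lp_\grm)^*$ — Schroeder--Bernstein style cancellations for direct sums of Banach spaces are notoriously delicate. The step above sidesteps this entirely by working directly with the quotient $\lp/\overline{\lp_b}$ and exploiting the fact that the isomorphism $\lp\cong\ell^\infty$ is literally the identity map on sequences.
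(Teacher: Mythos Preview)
Your proposal is correct and follows the same route as the paper: combine Theorem~\ref{dual-splitting}, Corollary~\ref{p-inf-equivalence}, and the Yosida--Hewitt decomposition of $(\ell^\infty)^*$. The paper's own argument is only the one-line ``combine these observations'', so you are simply filling in what the paper leaves implicit.

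One remark: your intermediate step --- observing that $\pp$-bounded sets are exactly the finite sets, so $\lp_b=c_{00}$, and that since the isomorphism $\lp\cong\ell^\infty$ is the identity on sequences one gets $\overline{\lp_b}=c_0$ and hence $\lp_\grm\cong\ell^\infty/c_0$ --- is a genuine improvement over the paper's presentation. As you correctly point out in your final paragraph, without this identification one would be left trying to match up two direct-sum decompositions $\ell^{\pprime}\oplus(\lp_\grm)^*$ and $\ell^1\oplus pba(\B(\N))$ abstractly, and cancellation of isomorphic summands in Banach spaces is not automatic. The paper does not flag this issue; your argument handles it cleanly.
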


\begin{remark} \label{remark:quick-growth}
The restriction that \eqref{eqn:char-ellp} holds is a very
strong one.  It is essentially a growth condition on $\pp$, and
requires $p(k)\rightarrow \infty$ quickly.  A simple example of this
is given by $p(k)=\log(k)^a$, $k\geq 2$ and $a>0$.  By the
integral test it is easy to see that~\eqref{eqn:char-ellp} holds if
and only if $a\geq 1$.  
\end{remark}

\medskip

\subsection{$\ell^\pp$ close to $\ell^\infty$}
To generalize Theorem~\ref{thm:dual-ellp-case1} we need to understand
better how far $\ell^\pp$ is from containing $\ell^\infty$.  We do
this by introducing the concept of a set with finite $\pp$-content.

\begin{definition}
Given a set $A\subseteq \mathbb{N}$, we say that it has \textit{finite
  $\pp$-content} if there exists a constant $B>1$ such that
$$
\sum_{k \in A} B^{-p(k)} < \infty.
$$
\end{definition}

If $p_+ < \infty$ then only finite subsets have finite $\pp$-content.
If $\ell^\infty\subset \ell^\pp$, then by
Lemma~\ref{characterizationellp}, $\N$ has finite $\pp$-content.  
However, given any unbounded exponent $\pp$, there exist infinite
subsets which have finite $\pp$-content:  no matter how slowly $p(k)$
grows, we can choose a set $A$ that is sufficiently sparse that it
will have finite $\pp$-content.  Denote by $\M$ the collection of all
subsets of natural numbers with finite $\pp$-content.

 \begin{remark}
   The set $\A$ is closed under intersections and finite unions, but
   is not closed under complements unless $\N$ has finite
   $\pp$-content.  If $\N$ does not have finite $\pp$-content, then
   the complement of {any} set which has finite $\pp$-content will
   not.  Thus, $\M$ {is} not in general a $\Sigma$-algebra.  It is
   however, a distributive lattice; it is {unbounded when $\N$ does not have
     finite $\pp$-content because} finite subsets of $\N$ always have
   finite $\pp$-content and it is impossible to find a maximal element
   of $\M$ with respect to inclusion.
 \end{remark}

 We now define a set function on $\M$, which, in some sense, measures
   how much a given set is affected by the singularity of $\pp$.
 
\begin{definition}
Given an exponent $\pp\in \Pp{(\N)}$, and a set  $A \in \M$, we define the
set function $w^\pp$ by
\begin{equation*}
  \omega^\pp(A):=\| [\id_A]\|_{\lp_\grm}.
\end{equation*}
When there is no possibility for confusion, we will write $\omega(A)$
instead of $\omega^\pp(A)$.  
\end{definition}

\begin{lemma}\label{lemw(AUB)}
Given an exponent $\pp\in \Pp{(\N)}$,  if $A,B\in \A$ have $\pp$-bounded
intersection, then, $\omega^\pp$ is  subadditive.  In fact,
\begin{equation*}
  \omega(A\cup B)= \max \left\{ \omega(A), \omega(B) \right\} \leq
  \omega(A)+\omega(B). 
\end{equation*}
\end{lemma}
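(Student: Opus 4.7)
The plan is to apply Lemma~\ref{lemma:grm-norm-limit} with the nested family $E_k := \{n\in\N : p(n)\geq k\}$, noting that $E_k^c$ is $\pp$-bounded, $E_{k+1}\subseteq E_k$, and $p_-(E_k)\geq k\to\infty$. This gives
\[
\omega(C) = \|[\id_C]\|_{\lp_\grm} = \lim_{k\to\infty} \|\id_C\id_{E_k}\|_\pp
\]
for each $C\in\M$. The inequality $\max\{\omega(A),\omega(B)\}\leq\omega(A)+\omega(B)$ is immediate from the positivity of $\omega$. The lower bound $\max\{\omega(A),\omega(B)\}\leq \omega(A\cup B)$ follows from the pointwise estimates $\id_A\id_{E_k},\id_B\id_{E_k}\leq\id_{A\cup B}\id_{E_k}$, the lattice property of the norm, and passing to the limit. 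So the substantive step is the reverse inequality $\omega(A\cup B) \leq \max\{\omega(A), \omega(B)\}$.

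The first move toward this bound is to use the hypothesis that $A\cap B$ is $\pp$-bounded: for every $k > p_+(A\cap B)$ the set $A\cap B\cap E_k$ is empty, so $(A\cup B)\cap E_k = (A\cap E_k)\sqcup (B\cap E_k)$, and the modular on the sequence space splits additively:
\[
\rho_\pp\bigl(\lambda^{-1}\id_{(A\cup B)\cap E_k}\bigr) = \rho_\pp\bigl(\lambda^{-1}\id_A\id_{E_k}\bigr) + \rho_\pp\bigl(\lambda^{-1}\id_B\id_{E_k}\bigr).
\]
The key analytic input is then a geometric decay estimate. Fix $\lambda > M := \max\{\omega(A), \omega(B)\}$, choose $\mu\in(M,\lambda)$, and set $r := \mu/\lambda<1$. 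For all sufficiently large $k$, Lemma~\ref{lemma:grm-norm-limit} gives $\|\id_A\id_{E_k}\|_\pp<\mu$ and $\|\id_B\id_{E_k}\|_\pp<\mu$, so $\rho_\pp(\mu^{-1}\id_A\id_{E_k})\leq 1$ and similarly for $B$. Since $p(n)\geq k$ on $E_k$, one has $r^{p(n)}\leq r^k$, and hence
\[
\rho_\pp\bigl(\lambda^{-1}\id_A\id_{E_k}\bigr) = \sum_{n\in A\cap E_k} r^{p(n)}\mu^{-p(n)} \leq r^k \rho_\pp\bigl(\mu^{-1}\id_A\id_{E_k}\bigr) \leq r^k,
\]
and the analogous bound for $B$. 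Plugging these into the additive decomposition yields $\rho_\pp(\lambda^{-1}\id_{(A\cup B)\cap E_k})\leq 2r^k$, which is at most $1$ for all $k$ large enough, so $\|\id_{A\cup B}\id_{E_k}\|_\pp\leq\lambda$. Sending $k\to\infty$ and then $\lambda\downarrow M$ closes the argument.

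The main obstacle, and the reason the inequality becomes an \emph{equality} with a maximum rather than merely a sum, is the geometric gain $r^{p(n)}\leq r^k$. It is only because $p_-(E_k)\to\infty$ that increasing the divisor of the modular by any fixed multiplicative factor produces arbitrarily small modular values on the tail $E_k$; without this, disjoint supports would yield only the standard Banach-space triangle inequality, and the best one could hope for would be $\omega(A\cup B)\leq\omega(A)+\omega(B)$, which is how $\lp$ itself behaves. Thus the $L^\infty$-like maximum law reflects precisely the singular geometry at infinity that the germ space is designed to capture.
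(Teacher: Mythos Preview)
Your proof is correct, but it takes a noticeably longer route than the paper's. The paper invokes Lemma~\ref{characlpgerm} (the modular-finiteness characterization of the germ norm) rather than Lemma~\ref{lemma:grm-norm-limit}, and with that tool the result is nearly a one-liner: since $\sum_{k\in A\cup B}\lambda^{-p(k)}<\infty$ if and only if both $\sum_{k\in A}\lambda^{-p(k)}<\infty$ and $\sum_{k\in B}\lambda^{-p(k)}<\infty$ (one direction by inclusion, the other by subadditivity of the sum), the set of admissible $\lambda$ for $A\cup B$ is the intersection of those for $A$ and $B$, and the infimum of the intersection of two upward rays is the maximum of the two infima. No decay estimate or $\varepsilon$-of-room argument is needed. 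Your geometric decay bound $r^{p(n)}\leq r^k$ is a correct and illuminating way to see the $L^\infty$-like behavior, but it is doing by hand what the modular-finiteness characterization already encodes. It is also worth noting that the paper's argument does not actually use the hypothesis that $A\cap B$ is $\pp$-bounded (and neither does yours, really: the disjoint split could be replaced by the inequality $\rho_\pp(\lambda^{-1}\id_{(A\cup B)\cap E_k})\leq \rho_\pp(\lambda^{-1}\id_A\id_{E_k})+\rho_\pp(\lambda^{-1}\id_B\id_{E_k})$ without affecting the conclusion).
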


\begin{proof}
  By Proposition \ref{characlpgerm},
  \begin{align*}
    \omega(A\cup B)
    &=\Vert [\id_{A\cup B}]\Vert_{\lp_\grm} \\
    & = \inf \bigg\{\lambda >0:
      \sum_{k\in A\cup B} \lambda^{-p(k)} < +\infty \bigg\} \\
    &=\max \bigg( \inf \bigg\{\lambda >0:
      \sum_{k\in A} \lambda^{-p(k)} < +\infty \bigg\},
      \inf \bigg\{\lambda >0:  \sum_{k\in B} \lambda^{-p(k)} < +\infty \bigg\} \bigg) \\
&=\max \left\{ \omega(A), \omega(B) \right\}.
  \end{align*}
\end{proof}

We can use the set function $\omega$ to compute the norms of certain
(simple) sequences in the germ space.

\begin{lemma}\label{lemSucConv}
Given an exponent $\pp\in \Pp{(\N)}$,  suppose  $x\in \lp$ is supported
in an infinite set $A\subset \N$, and furthermore that $x$ converges to $\alpha$ along all divergent sequences in $A$. Then
$$
\Vert [x] \Vert_{\lp_\grm} = |\alpha| \omega(A).
$$
\end{lemma}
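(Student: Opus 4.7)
The plan is to use the limit representation of the germ norm from Lemma \ref{lemma:grm-norm-limit} applied to the nested sequence
\[
E_k := \{n \in \N : p(n) \geq k\}.
\]
These sets satisfy the required hypotheses: since $p$ is finite everywhere, $E_k^c = \{p < k\}$ is $\pp$-bounded; the sets are decreasing; and $p_-(E_k) \geq k \to \infty$. By that lemma,
\[
\|[x]\|_{\lp_\grm} = \lim_{k \to \infty} \|x \id_{E_k}\|_\pp, \qquad \omega(A) = \lim_{k \to \infty} \|\id_{A \cap E_k}\|_\pp.
\]

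The first substantive step is to show that elements of $E_k$ are forced to diverge as $k \to \infty$: for any fixed $N$, the quantity $\max_{n \leq N} p(n)$ is a finite maximum over finitely many finite values, so for $k$ larger than this maximum we have $E_k \subset \{n > N\}$. Combined with the hypothesis that $x(n) \to \alpha$ along divergent sequences in $A$, this yields
\[
r_k := \sup_{n \in E_k \cap A} |x(n) - \alpha| \xrightarrow[k \to \infty]{} 0.
\]
Since $x$ is supported on $A$, the identity $x \id_{E_k} = x \id_{A \cap E_k}$ holds, and for $k$ large we get the pointwise sandwich
\[
\bigl(|\alpha| - r_k\bigr) \id_{A \cap E_k} \;\leq\; |x \id_{E_k}| \;\leq\; \bigl(|\alpha| + r_k\bigr) \id_{A \cap E_k}
\]
(with the lower bound being vacuous when $r_k \geq |\alpha|$).

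The conclusion then follows by applying the monotonicity of the $\lp$ norm to this sandwich, yielding
\[
\bigl(|\alpha| - r_k\bigr) \|\id_{A \cap E_k}\|_\pp \;\leq\; \|x \id_{E_k}\|_\pp \;\leq\; \bigl(|\alpha| + r_k\bigr) \|\id_{A \cap E_k}\|_\pp,
\]
and passing to the limit $k \to \infty$; both extremes tend to $|\alpha| \omega(A)$, giving the claimed equality.

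The main subtlety worth flagging is that the statement tacitly uses $\omega(A) < \infty$, i.e.\ $A \in \M$. In the interesting regime $\alpha \neq 0$ this is automatic: eventually $|x(n)| \geq |\alpha|/2$ on $A$, so the convergence of $\sum_n |x(n)/\lambda|^{p(n)}$ (valid because $x \in \lp$) forces $\sum_{n \in A,\, n \geq N} (2\lambda/|\alpha|)^{-p(n)} < \infty$, putting $A$ into $\M$. In the degenerate case $\alpha = 0$ the right-hand side is zero and the same sandwich (its upper half) shows $\|[x]\|_{\lp_\grm} \leq r_k \|\id_{A \cap E_k}\|_\pp$, which tends to $0$ provided $A \in \M$; this is the only case where one might need to assume $A \in \M$ separately rather than derive it.
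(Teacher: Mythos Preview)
Your proof is correct and follows essentially the same sandwich argument as the paper's: both bound $|x|$ between $(|\alpha|-\varepsilon)\id_A$ and $(|\alpha|+\varepsilon)\id_A$ on a tail and let $\varepsilon\to 0$. The only cosmetic difference is that you route the argument through the $E_k$ limit characterization of Lemma~\ref{lemma:grm-norm-limit}, whereas the paper works directly with the germ norm using the fact that removing a finite prefix of $A$ does not change $\omega(A)$ or $[x]$; your added remark that $A\in\M$ is automatic when $\alpha\neq 0$ is a nice clarification not made explicit in the paper.
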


\begin{proof}
  Fix $\varepsilon > 0$;
  then there exists $ N \in A$ such that $|x(k)-\alpha|<\varepsilon$
  for every $k\in A_N= \{ k \in A : k\geq N\}$.  But then
\begin{multline*}
  |\alpha - \varepsilon|\omega(A_N)
  =\Vert [(\alpha - \varepsilon)\id_{A_N}] \Vert_{\lp_\grm} \\
  \leq \Vert [x \id_{A_N}] \Vert_{\lp_\grm}
  \leq \Vert [(\alpha+ \varepsilon)\id_{A_N}] \Vert_{\lp_\grm}
  = |\alpha+ \varepsilon|\omega(A_N).
\end{multline*}
Since finite sets are $\pp$-bounded, for all $N$,
$\omega(A_N)=\omega(A)$ and $\|[x \id_{A_N}] \|_{\lp_\grm}= \|[x ]
\|_{\lp_\grm}$.   Therefore, since the above inequality holds for all
$\varepsilon>0$, we get the desired equality.
\end{proof}

We would like to extend this result to more general sequences.
Given $x\in \lp$, let $\acc(x)$ denote the set of limit
points of the sequence.  This set could be quite large--indeed, it
could contain an arbitrarily large interval.   We consider the special
case where $\acc(x)$ is finite.

\begin{proposition} \label{prop:normXwithmax}
Given $x\in \lp$, suppose  $\acc(x)=\{\alpha_i\}_{i=1}^n$.
Fix $\delta>0$ such that $|\alpha_i-\alpha_j|\geq \delta$, $i\neq j$,
and define $A_i=\left\{ k\in \N: |x(k)-\alpha_i|<\delta /2\right\}.$ Then
  \begin{equation}\label{eqn:grmnorm-acc}
    \| [x] \|_{\lp_\grm} = \max_{1\leq i \leq n} |\alpha_i| \omega(A_i){.}
  \end{equation}
\end{proposition}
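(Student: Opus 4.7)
The plan is to reduce the computation to Lemma \ref{lemSucConv} by decomposing $x$ into components supported on the $A_i$ plus a finite remainder, and then to show that the germ norm distributes as a maximum over this disjoint decomposition.

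First I would establish two structural facts about the $A_i$. They are pairwise disjoint, which follows directly from $|\alpha_i - \alpha_j| \geq \delta$ and the triangle inequality. More importantly, the set $B := \N \setminus \bigcup_{i=1}^n A_i$ is finite. For this, I would argue by contradiction: if $B$ were infinite, then since $x \in \lp \subseteq \ell^\infty$ by Lemma \ref{easyellpinclusion}, the values of $x$ along a divergent enumeration of $B$ would form a bounded sequence admitting a convergent subsequence, whose limit $\beta$ would belong to $\acc(x) = \{\alpha_j\}_{j=1}^n$. But then the tail of this subsequence would eventually enter $A_j$, contradicting membership in $B$.

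Next I would apply Lemma \ref{characlpgerm} and partition the modular sum as
\[
  \rho_\pp(x/\lambda) = \sum_{i=1}^n S_i(\lambda) + \sum_{k \in B} |x(k)/\lambda|^{p(k)},
\]
where $S_i(\lambda) := \sum_{k \in A_i} |x(k)/\lambda|^{p(k)}$. The $B$-sum is always finite, and the remaining finite sum of nonnegative terms is finite if and only if each $S_i(\lambda)$ is finite. Since each $S_i$ is decreasing in $\lambda$, the set $\{\lambda > 0 : S_i(\lambda) < \infty\}$ is a half-line whose infimum equals $\|[x \id_{A_i}]\|_{\lp_\grm}$, again by Lemma \ref{characlpgerm}. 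A finite intersection of such half-lines has infimum equal to the maximum of the individual infima, so
\[
  \|[x]\|_{\lp_\grm} = \max_{1 \leq i \leq n} \|[x \id_{A_i}]\|_{\lp_\grm}.
\]

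To conclude, I would apply Lemma \ref{lemSucConv} to each $x \id_{A_i}$: by construction, it is supported in $A_i$, and its values on $A_i$ lie within $\delta/2$ of $\alpha_i$ with $\alpha_i$ as their only accumulation point, so $\|[x \id_{A_i}]\|_{\lp_\grm} = |\alpha_i| \omega(A_i)$. A brief case check handles $\alpha_i = 0$ with $A_i$ finite: then $x \id_{A_i} \in \lp_b$ and its germ norm is trivially $0 = |\alpha_i| \omega(A_i)$. The only real obstacle is confirming the finiteness of $B$; once that is in hand, the rest is a clean bookkeeping exercise with the modular characterization of the germ norm.
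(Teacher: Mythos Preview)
Your proposal is correct and follows essentially the same approach as the paper: both arguments establish that $\N\setminus\bigcup_i A_i$ is finite, decompose the modular via Lemma~\ref{characlpgerm}, and invoke Lemma~\ref{lemSucConv} on each piece. Your organization is slightly cleaner (first reducing $\|[x]\|_{\lp_\grm}$ to $\max_i \|[x\id_{A_i}]\|_{\lp_\grm}$ via the half-line intersection, then computing each term), whereas the paper handles the two inequalities separately, but the underlying ideas are identical; note also that your case check for finite $A_i$ is harmless but unnecessary, since each $A_i$ contains the tail of a subsequence converging to $\alpha_i$ and is therefore automatically infinite.
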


\begin{proof}
Fix $0<\varepsilon<\delta/2$.  Then there exists $N>0$ such that if
$k\geq N$, there exists a unique $i$ such that $k\in A_i$ and
$|x(k)-\alpha_i|<\varepsilon$.  Therefore, arguing as we did in the
proof of Lemma~\ref{lemSucConv}, we have that
\[ (\alpha_i - \varepsilon)\omega(A_i) \leq
  \| [x\id_{A_i}]\|_{\lp_\grm}
  \leq \| [x]\|_{\lp_\grm}.  \]

Since this is true for all $i$,
\[  \| [x] \|_{\lp_\grm}  \geq \max_{1\leq i \leq n} |\alpha_i|
  \omega(A_i). \]

To prove the reverse inequality, fix $\lambda$ greater than the
right-hand side.  Since the sequence $x\id_{A_i}$ converges to
$\alpha_i$, By Lemmas~\ref{lemSucConv} and~\ref{characlpgerm},
\[ \sum_{k\in A_i} \bigg(\frac{|x(k)|}{\lambda}\bigg)^{-p(k)}< \infty.  \]
Since there are only a finite number of limit points, and since the
set $A_0 = \N\setminus \big( \cup_{i=1}^n A_i\big)$ is finite,
we have that
\[ \sum_{k=1}^\infty \bigg(\frac{|x(k)|}{\lambda}\bigg)^{-p(k)} = \sum_{i=0}^n
  \sum_{k\in A_i} \bigg(\frac{|x(k)|}{\lambda}\bigg)^{-p(k)}< \infty.  \]
Therefore, again by Lemma~\ref{characlpgerm}, $\lambda>\| [x]
\|_{\lp_\grm} $.  If we take the infimum over all such $\lambda$, we
get the desired inequality.
\end{proof}

Motivated by this example, we consider the following subspace of
$\lp$, which generalizes the simple functions.  Given $\pp \in
\Pp(\N)$, define the set of finite $\pp$-content simple functions to
be 
\[ S_\pp := \left\{x=\sum_{k=1}^N \alpha_k \id_{A_k} : A_k \in \M
    \text{ disjoint},
    \alpha_k \in \R \right \}. \]

In some sense, the set $S_\pp$ is larger than $\ell^\pp_b$, as the
next lemma shows.

\begin{lemma} \label{lemma:bounded-Spp}
  Given $\pp\in \Pp(\N)$, $\overline{\ell^\pp_b}\subset
  \overline{S_\pp}$.
\end{lemma}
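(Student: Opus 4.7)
The plan is to prove the stronger statement $\ell^\pp_b \subset \overline{S_\pp}$; the desired inclusion of closures then follows immediately. Fix $x \in \ell^\pp_b$, set $M := p_+(\supp(x)) < \infty$, and define the finite truncations
\[ x_N := \sum_{k=1}^N x(k)\, \id_{\{k\}}, \qquad N \in \N. \]
Each singleton $\{k\}$ has finite $\pp$-content, so $\{k\} \in \M$, and the $\{k\}$ for $k=1,\dots,N$ are pairwise disjoint. Therefore $x_N \in S_\pp$ for every $N$, and the task reduces to showing $\|x - x_N\|_\pp \to 0$.

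The second step is to verify this convergence using the boundedness of $\pp$ on $\supp(x)$. Since $x \in \lp$, I can fix $\lambda > \|x\|_\pp$ so that the series $\sum_{k \in \N}(|x(k)|/\lambda)^{p(k)}$ converges; its tail $T_N := \sum_{k>N}(|x(k)|/\lambda)^{p(k)}$ then tends to $0$ as $N \to \infty$. For any $\varepsilon \in (0,\lambda)$, I would use $p(k) \leq M$ on $\supp(x)$ to estimate
\[ \rho_\pp\!\left(\frac{x - x_N}{\varepsilon}\right) = \sum_{k > N} \left(\frac{\lambda}{\varepsilon}\right)^{p(k)} \left(\frac{|x(k)|}{\lambda}\right)^{p(k)} \leq \left(\frac{\lambda}{\varepsilon}\right)^{M} T_N \xrightarrow[N\to\infty]{} 0. \]
In particular, for $N$ sufficiently large the modular is at most $1$, so $\|x - x_N\|_\pp \leq \varepsilon$. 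Since $\varepsilon$ was arbitrary (and for $\varepsilon \geq \lambda$ the bound is trivial), this yields $x_N \to x$ in $\lp$.

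Together these steps give $x \in \overline{S_\pp}$, and since $x \in \ell^\pp_b$ was arbitrary, $\ell^\pp_b \subset \overline{S_\pp}$. Passing to closures completes the proof. There is no real obstacle here: the only substantive content is the approximation $x_N \to x$, which is essentially the classical fact that finitely supported sequences are dense in $\ell^p$ for bounded $p$, transplanted to the variable exponent setting by exploiting the uniform bound $p(k) \leq M$ on $\supp(x)$ to tame the factor $(\lambda/\varepsilon)^{p(k)}$.
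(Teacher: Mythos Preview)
Your proof is correct and follows essentially the same route as the paper: both approximate $x \in \ell^\pp_b$ by its finite truncations $x_N \in S_\pp$ (singletons being in $\M$) and then pass to closures. The paper first argues that $x(k) \to 0$ along $\supp(x)$ and then asserts, without detail, that the truncations converge in $\lp$; your explicit modular estimate using the uniform bound $p(k) \leq M$ on $\supp(x)$ is a more direct justification of that same convergence step and in fact makes the detour through $x(k)\to 0$ unnecessary.
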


\begin{proof}
Fix any $x\in \ell^\pp_b$  and let  $A=\supp(x)$.   By definition,
$p_+(A)<\infty$.  If $A$ is a
finite set, then $x\in S_\pp$, since all finite sets are in $\M$.  If
$A$ is infinite, then $x(k)\rightarrow 0$ as $k\rightarrow \infty$.
for if not, there exists $\epsilon>0$ and a subsequence
$\{k_j\}_{j=1}^\infty$ of $A$ such that $|x(k_j)|\geq \epsilon$, which
means that for any $\lambda>1$,
\[ \sum_{j=1}^\infty \bigg(\frac{|x(k_j)|}{\lambda}\bigg)^{p(k_j)} 
\geq (\epsilon\lambda^{-1})^{p_+(A)} \sum_{j=1}^\infty 1 = \infty, \]
which contradicts the fact that $x\in \lp$. 

But if $x(k)\rightarrow 0$, then any truncation of $x$ lies in $S_\pp$
and approximates $x$ in $\lp$ norm.  Hence, $x\in \overline{S_\pp}$,
and the desired inclusion follows at once.
\end{proof}

If $S_\pp$ is dense in $\lp$, we can characterize the dual of $\lp$,
generalizing Theorem~\ref{thm:dual-ellp-case1}.   To state our result,
we need the following definition, which generalizes the concept of
a finitely additive measure to the set $\M$ (which as we noted above is not
a $\Sigma$-algebra).

\begin{definition}\label{defPbaw}
  Given $\pp \in \Pp(\N)$, define $pba_\omega(\M)$ to be the vector
  space of set functions $\mu$ defined on $\M$
  that satisfy the following  properties:
  \begin{enumerate}
\item $\mu(A\cup B)=\mu(A)+\mu(B)$ for any pair of disjoint
  sets $A,\,B \in \M$.
  \item There exists $C>0$ such that given any collection
    $\{A_i\}_{i=1}^n$ of pairwise disjoint sets in $\M$, 
  $$
\sum_{i=1}^n \frac{\left| \mu(A_i) \right|}{w(A_i)}\leq C.
$$
\end{enumerate}
Define a  norm on $pba_\omega(\A)$  by 
$$
\| \mu \|_{pba_\omega}
:=
\inf \left\{C>0: \text{ condition (2) holds} \right\}.
$$
\end{definition}
\begin{remark}
  If we assume that $\omega$ takes the value 1 when the subset is infinite
  and 0 otherwise, then we recover the classical definition of $pba$ because
  condition (2) would imply that $\mu(A)=0$ for finite sets $A$
  and that $\mu$ has finite variation.
\end{remark}

  Since $\M$ is not a $\Sigma$-algebra, the elements of
  $pba_\omega(\M)$ are not finitely additive measures.  However, they
  are closely connected to the collection of finitely additive
  measures:  as the next example shows, we
  can construct elements of $pba_\omega(\M)$  from finitely additive
  measures defined on a fixed element of $\M$.

  \begin{proposition} \label{example:pbaw}
Given $\pp \in \Pp(\N)$, fix $D\in \M$.  Let $\mu \in pba(\B(D))$
and for $A\in \M$, define $\mu_D(A)= \mu(A\cap D)$.  Then $\mu_D\in
pba_w(\M)$.
\end{proposition}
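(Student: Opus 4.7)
The plan is to verify the two defining properties of $pba_\omega(\M)$ from Definition~\ref{defPbaw} for $\mu_D$, using only the finite additivity, pure finite additivity, and finite total variation of $\mu$, together with one structural property of $\omega$.

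Property (1) will be essentially immediate: for disjoint $A, B \in \M$, the sets $A \cap D$ and $B \cap D$ are disjoint in $\mathcal{B}(D)$, so finite additivity of $\mu$ gives $\mu_D(A \cup B) = \mu((A \cap D) \cup (B \cap D)) = \mu_D(A) + \mu_D(B)$.

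The heart of the argument is property (2), and the key observation I will use is that $\omega(A) \geq 1$ whenever $A \in \M$ is infinite. This follows straight from Lemma~\ref{characlpgerm}: for $\lambda \leq 1$, each summand $\lambda^{-p(k)} \geq 1$ forces the modular sum $\sum_{k \in A} \lambda^{-p(k)}$ to diverge when $A$ is infinite, so the infimum defining $\omega(A)$ cannot dip below $1$. Given a disjoint family $\{A_i\}_{i=1}^n \subset \M$, I will split the indices according to whether $A_i$ is finite or infinite. For the finite $A_i$, the set $A_i \cap D$ is finite and pure finite additivity of $\mu$ forces $\mu_D(A_i) = 0$; this is exactly what is needed to rule out the otherwise dangerous $0/0$ contributions (and incidentally forces the convention to be consistent). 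For the infinite $A_i$, the bound $\omega(A_i) \geq 1$ yields
\[
\frac{|\mu_D(A_i)|}{\omega(A_i)} \leq |\mu(A_i \cap D)|.
\]
Since the $A_i \cap D$ are pairwise disjoint subsets of $D$, summing and bounding by the total variation of $\mu$ gives $\sum_{i=1}^n |\mu(A_i \cap D)| \leq |\mu|(D) < \infty$, so the constant $|\mu|(D)$ works uniformly.

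There is no serious obstacle; the only delicate point is the treatment of the finite members of $\{A_i\}$, where \emph{pure} finite additivity of $\mu$ (not merely boundedness of variation) is essential to kill the indeterminate terms, since $\omega$ vanishes on finite sets. Once that is handled, the proof reduces to the clean estimate $\|\mu_D\|_{pba_\omega} \leq |\mu|(D)$.
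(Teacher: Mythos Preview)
Your proof is correct and takes a genuinely different, more elementary route than the paper. The paper verifies property~(2) by invoking the duality isomorphism of Theorem~\ref{thm:dual-ellp-case1}: since $D$ has finite $\pp$-content, $pba(\B(D))$ is isomorphic to $(\ell^\pp_\grm(D))^*$, so $\mu$ corresponds to a bounded functional $\phi$, and the sum $\sum_i |\mu_D(A_i)|/\omega(A_i)$ is bounded by $\|\phi\|$ after evaluating $\phi$ on the test function $\sum_i \sgn(\mu(A_i\cap D))\,\omega(A_i\cap D)^{-1}\id_{A_i\cap D}$, whose germ norm is shown to equal $1$ via Proposition~\ref{prop:normXwithmax}. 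Your argument bypasses all of this machinery: the single observation that $\omega(A)\geq 1$ for every infinite $A\in\M$ (indeed, $\sum_{k\in A}\lambda^{-p(k)}\geq |A|=\infty$ for $\lambda\leq 1$) reduces property~(2) directly to the finite total variation of $\mu$, with the pure finite additivity of $\mu$ disposing of the finite $A_i$ where $\omega$ vanishes. Your approach is shorter, self-contained, and yields the explicit bound $\|\mu_D\|_{pba_\omega}\leq |\mu|(D)$; the paper's approach, while heavier, illustrates how the abstract duality of Theorem~\ref{thm:dual-ellp-case1} feeds back into concrete estimates and gives the bound in terms of the operator norm of $\phi$.
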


\begin{proof}
  Property $(1)$ follows at once from the fact that $\mu$
  is a finitely additive measure.   
  Property  $(2)$ follows from
  Theorem~\ref{thm:dual-ellp-case1}.  Since 
  $pba(\B(D))$ is isomorphic to
  $\ell^\pp_\grm(D)^*$, let $\phi \in \ell^\pp_\grm(D)^*$ be the bounded
  linear functional associated to $\mu$.  Then we have that
  $$
  \sum_{i=1}^n \frac{\left| \mu_D(A_i) \right|}{w(A_i)}\leq
  \sum_{i=1}^n \frac{\left| \mu(A_i\cap D) \right|}{w(A_i\cap D)}
  =\phi\left(\sum_{i=1}^n \frac{\sgn( \mu(A_i\cap D))}{w(A_i\cap
      D)}\id_{A_i\cap D} \right)\leq \|\phi\|; 
  $$
the last inequality follows from the fact that by  Proposition \ref{prop:normXwithmax},
  $$
  \left\Vert \sum_{i=1}^n \frac{\sgn( \mu(A_i\cap D))}{w(A_i\cap
      D)}\id_{A_i\cap D} \right\Vert_{\lp_{\grm}}=1.
  $$
\end{proof}

\begin{theorem} \label{thm:dual-lp-case2}
Given $\pp \in \Pp(\N)$, suppose $S_\pp$ is dense in $\lp$.  Then
$(\lp)^*$ is isomorphic to {the external direct sum}
\[ \ell^\cpp \oplus pba_\omega(\M). \]
\end{theorem}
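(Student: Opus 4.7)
The plan is to reduce, via Theorem~\ref{dual-splitting}, to an identification $(\lp_\grm)^* \cong pba_\omega(\M)$; the density hypothesis will enter only in the final extension step of the inverse map. The correspondence will be $\phi \longleftrightarrow \mu_\phi$ where $\mu_\phi(A) := \phi([\id_A])$ for $A \in \M$, which is meaningful because $\id_A \in \lp$ whenever $A \in \M$ (one can make $\rho_\pp(\id_A/\lambda)$ arbitrarily small by enlarging $\lambda$, using the finite $\pp$-content sum of $A$ as a dominating series).

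Going from $(\lp_\grm)^*$ to $pba_\omega(\M)$, property~(1) of Definition~\ref{defPbaw} is immediate from linearity of $\phi$ and the identity $[\id_{A \cup B}] = [\id_A] + [\id_B]$ for disjoint sets. For property~(2), given pairwise disjoint $\{A_i\}_{i=1}^n \subset \M$ with $\omega(A_i) > 0$ (sets with $\omega(A_i)=0$ force $\mu_\phi(A_i) = 0$ and may be discarded), I will test $\phi$ against
\[
y := \sum_{i=1}^n \sgn(\mu_\phi(A_i))\,\omega(A_i)^{-1}\id_{A_i},
\]
which by Proposition~\ref{prop:normXwithmax} has germ norm exactly $1$; the identity $\phi([y]) = \sum_i |\mu_\phi(A_i)|/\omega(A_i)$ then yields $\|\mu_\phi\|_{pba_\omega} \leq \|\phi\|_{(\lp_\grm)^*}$. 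For the reverse direction, given $\mu \in pba_\omega(\M)$, I first define a linear functional $\widetilde{\phi}_\mu$ on the algebraic subspace $S_\pp$ by $\widetilde{\phi}_\mu\bigl(\sum_i \alpha_i\id_{A_i}\bigr) := \sum_i \alpha_i\mu(A_i)$ (well defined by passing to common disjoint refinements and invoking property~(1)). The key bound is
\[
\Bigl|\textstyle\sum_i \alpha_i\mu(A_i)\Bigr|
\leq \max_i |\alpha_i|\omega(A_i)\cdot \sum_i \frac{|\mu(A_i)|}{\omega(A_i)}
\leq \bigl\|\bigl[\textstyle\sum_i \alpha_i\id_{A_i}\bigr]\bigr\|_{\lp_\grm}\,\|\mu\|_{pba_\omega},
\]
where the norm identification in the second inequality is Proposition~\ref{prop:normXwithmax}. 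Thus $\widetilde{\phi}_\mu$ descends to a bounded functional on $[S_\pp]\subset \lp_\grm$, which extends uniquely by continuity to all of $\lp_\grm$ using the hypothesis that $[S_\pp]$ is dense.

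Checking that the two maps are mutually inverse is essentially tautological: $\mu_{\widetilde{\phi}_\mu}(A) = \mu(A)$ by construction, and $\widetilde{\phi}_{\mu_\phi}$ agrees with $\phi$ on $[S_\pp]$ and hence everywhere by density. The main technical care needed is in handling both directions of Proposition~\ref{prop:normXwithmax}—pushing the sign choice inside $\phi$ to obtain the estimate for $\|\mu_\phi\|_{pba_\omega}$, and factoring $\sum \alpha_i\mu(A_i)$ as a weighted combination of the ratios $\mu(A_i)/\omega(A_i)$ in the other direction—together with a minor bookkeeping matter concerning sets of $\omega$-content zero.
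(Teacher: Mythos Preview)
Your proposal is correct and follows essentially the same approach as the paper: reduce via Theorem~\ref{dual-splitting} to identifying $(\lp_\grm)^*$ with $pba_\omega(\M)$, set up the correspondence $\phi \leftrightarrow \mu_\phi$ with $\mu_\phi(A)=\phi([\id_A])$, use Proposition~\ref{prop:normXwithmax} in both directions for the norm estimates, and invoke density of $S_\pp$ for the extension. Your treatment is in fact slightly more careful than the paper's in two places---you explicitly dispose of sets with $\omega(A_i)=0$ and you flag the well-definedness of $\widetilde{\phi}_\mu$ under different representations of a simple function---both of which the paper glosses over.
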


\begin{proof}
   By Theorem \ref{dual-splitting}, it suffices to prove that
   $(\lp_\grm)^*$ is isomorphic to $pba_\omega(\M)$.   In fact, we
   will show that there exists an isometric isomorphism.
   
 First,  given $\mu \in pba_\omega(\M)$ we will construct a linear
 functional $\phi_\mu$.   Given
 $x = \sum_{k=1}^N \alpha_k \mathbf{1}_{A_k}$ in $S_{\pp}$, we define
  \begin{equation*}
    \phi_\mu(x) := \sum_{k=1}^N \alpha_k \mu(A_k).
  \end{equation*}
  It is immediate that $\phi_\mu$ is linear.  Furthermore, by
  Propositon \ref{prop:normXwithmax} and the definition of
  $\|\mu\|_{pba_\omega(\M)}$ we have that
  \begin{multline*}
    |\phi_\mu(x)|
    = \bigg| \sum_{k=1}^N \alpha_k \omega(A_k) \frac{\mu(A_k)}{\omega(A_k)} \bigg| \\
    \leq \bigg( \max_{1\leq k \leq N} |\alpha_k| \omega(A_k) \bigg)
    \sum_{k=1}^N \frac{\mu(A_k)}{\omega(A_k)} 
  = \|[x]\|_{\lp_\grm} \|\mu\|_{pba_\omega}. 
\end{multline*}
Consequently, $\phi_\mu$ may be extended to all of $\lp_\grm$ by
density of $S_\pp$.
We then have $\phi_\mu \in (\lp_\grm)^*$, with
\[ \|\phi_\mu\|_{(\lp_\grm)^*} \leq \|\mu\|_{pba_\omega}. \]

Conversely,  fix $\phi \in (\lp_\grm)^*$.
  Define a set function $\mu_\phi \colon \M \to \mathbb{R}$ by setting
  \begin{equation*}
    \mu_\phi(A) := \phi([\id_A]);
  \end{equation*}
  since $A \in \M$, $\mathbf{1}_A$ is in $\lp_\grm$, and so $\mu_\phi$ is well-defined.
Property (1) in the definition of $pba_\omega$ follows
  immediately from the definition.   
    To prove property (2), suppose that $\{A_i\}_{i=1}^n$ is a finite
  collection of pairwise disjoint sets in $\M$.
  Then,   again  by Proposition \ref{prop:normXwithmax},
  \begin{multline*}
    \bigg| \sum_{i=1}^n \frac{\mu_\phi(A_i)}{\omega(A_i)} \bigg|
    = \bigg|\phi\bigg( \sum_{i=1}^n \frac{1}{\omega(A_i)} \id_{A_i} \bigg) \bigg| \\
    \leq \|\phi\|_{(\lp_\grm)^*} \bigg\| \sum_{i=1}^n
      \frac{1}{\omega(A_i)} \id_{A_i} \bigg\|_{\lp_\grm} 
    = \|\phi\|_{(\lp_\grm)^*}
      \max_{1\leq i \leq n} \omega(A_i)^{-1} \omega(A_i)
      = \|\phi\|_{(\lp_\grm)^*}.
  \end{multline*}
  Therefore,
  \begin{equation*}
    \|\mu_\phi\|_{pba_\omega} \leq \|\phi\|_{(\lp_\grm)^*}.
  \end{equation*}

  Finally, it is clear from the definitions that 
  \begin{equation*}
    \mu_{\phi_\mu} = \mu \qquad \text{and} \qquad \phi_{\mu_\phi} =
    \phi. 
  \end{equation*}
Hence, the mapping $\mu \mapsto \phi_\mu$ is an isometric
  isomorphism and our proof is complete.
\end{proof}

\begin{remark}
  The functional $\phi_\mu$ defined above can be thought of as a
  generalized integral with respect to $\mu \in pba_\omega(\M)$; we
  first define the integral on the dense set $S_\pp$ and then extend
  it in the usual way.  Since
  $\M$ is not a $\Sigma$-algebra, $\phi_\mu$ is not a classical
  integral.
\end{remark}

In light of Theorem~\ref{thm:dual-lp-case2} we would like to
characterize when $S_\pp$ is dense in $\ell^\pp$.  We have not been
able to do so.  Originally, we believed that it was dense if and
only if $\N$ has finite $\pp$ content, which would reduce
Theorem~\ref{thm:dual-lp-case2} to Theorem~\ref{thm:dual-ellp-case1}.
More precisely, we wanted to argue as follows:  if we modify the proof of
Proposition~\ref{prop:normXwithmax} we can show that for arbitrary
$x\in \ell^\pp$, 

$$
\| [x] \|_{\lp_\grm} \geq \sup_{\alpha\in \acc(x)} \left[|\alpha| \lim_{n\to\infty} w\left(\left\{k\in \N: |x(k)-\alpha|<1/n \right\} \right)\right]
$$
If the reverse inequality were true, then $S_\pp$ would always be dense in
$\ell^\pp$.  (We leave the details to the interested reader.)   This,
however, is not the case, as the next example shows.  It
remains an open question as to when $S_\pp$ is dense.

\begin{example}\label{exam:SpNotDense} There exists $\pp \in \Pp(\N)$ such that $S_\pp$ is
  not dense in $\lp$.
\end{example}

\begin{proof}
  Partition $\N$ as
  \[ \N = \bigcup_{s=1}^\infty A_s, \]
  where the sets $A_s$ are infinite and disjoint.  On each set $A_s$,
  define $\pp$ to be an increasing exponent such that for $k\in
  A_s$,  $p(k)=n$  $s^n$ times for each $n\geq s$.   We define the
  sequence $x$ by $x(k)=1/s$ if $k\in A_s$.   Then we have that $x\in
  \lp$; moreover, $\|[x]\|_{\lp_\grm}=1$.  To see this, fix any
  $\lambda>1$; then,
  \[ \sum_{k=1}^\infty \bigg(\frac{|x(k)|}{\lambda}\bigg)^{p(k)}
    = \sum_{s=1}^\infty \sum_{k \in
      A_s}\bigg(\frac{|x(k)|}{\lambda}\bigg)^{p(k)}
    = \sum_{s=1}^\infty \sum_{n= s}^\infty s^n
    \bigg(\frac{1/s}{\lambda}\bigg)^n =
    \frac{1}{(1-\lambda^{-1})^2\lambda} < \infty.  \]
  Hence, $x\in \lp$.  Similarly, this sum is infinite for any
  $\lambda\leq 1$, so by Lemma~\ref{characlpgerm},
  $\|[x]\|_{\lp_\grm}=1$.

  Finally, to show that $x\not \in \overline{S_\pp}$, note first that
  none of the sets $A_s$ have finite $\pp$-content:  arguing as
  before, for any $\lambda>0$,
  \[ \sum_{k \in
      A_s}\bigg(\frac{1}{\lambda}\bigg)^{p(k)}
    = \sum_{s=1}^\infty \sum_{n= s}^\infty s^n
    \bigg(\frac{1}{\lambda}\bigg)^n = \infty, \]
  so by Lemma~\ref{characlpgerm}, $\|[\id_{A_s}]\|_{\lp_\grm}=\infty$.
  Therefore, given any $y\in S_\pp$, there exists $s\in \N$ such that
  $y(k)=1/s$ for only a finite number of values of $k$.  Hence, there exists $N$
  large such that for any $\lambda>0$, 
  \[ \sum_{k=1}^\infty \bigg(\frac{|x(k)-y(k)|}{\lambda}\bigg)^{p(k)}
    = \sum_{\substack{k \in
        A_s \\k \geq N}}\bigg(\frac{|x(k)|}{\lambda}\bigg)^{p(k)}, \]
  and this sum is only finite for $\lambda>1$, and so $\|x-y\|_{\lp}\geq
  1$. 
\end{proof}

\begin{remark}
  By a careful choice of the sets $A_s$, we can take $\pp$ to be an
  increasing exponent.  In this case, for each $n\in \N$, $\pp$ takes
  on the value $n$ exactly $s^n$ times for $1\leq s \leq n$, and so
  \[ \sum_{s=1}^n s^n > n^n. \]
The first time $p(k)=n$ is for $k$ larger than
  \[ \sum_{j=1}^{n-1} \sum_{s=1}^j s^j < (n-1)\sum_{s=1}^{n-1} s^j <
    (n-1)^n.  \]
  Therefore, we have that $p(n^n)=n$.  For $x>e$,  if $\phi(x)=x\log(x)$,
  then $\phi^{-1}(x) \approx \frac{x}{\log(x)}$.  Hence, this shows
  that, roughly, 
  \[ p(k) \approx \frac{\log(k)}{\log\log(k)}.  \]
  Since if $p(k)=\log(k)$, we have that $\N$ has finite $\pp$-content,
  this example suggests that $S_\pp$ is dense in $\lp$ exactly when
  $\N$ has finite $\pp$-content.
\end{remark}

\medskip

Since understanding the closure of $S_\pp$ will be important for the
final characterization of the dual which we give below, we conclude
this section with a straightforward sufficient condition for when a
sequence lies in the closure.

\begin{proposition}\label{prop:w-dense}
  Let $x \in \lp$ and suppose that $\supp(x) \in \M$.
  Then $x \in \overline{S_\pp}$; furthermore,
  \begin{equation*}
    \|x\|_{\lp} \leq C \|x\|_{\ell^\infty},
  \end{equation*}
where the constant $C$ depends only on $\omega(\supp(x))$.  In
particular, if $\N$ has finite $\pp$-content, then $S_\pp$ is dense in
$\ell^\pp$. 
\end{proposition}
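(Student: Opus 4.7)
The plan is to split the proof into two steps: first, establish the norm bound $\|x\|_{\lp} \leq C \|x\|_{\ell^\infty}$, and then use it to approximate $x$ by finite sums of indicators of sets in $\M$.

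For the norm bound, I would first observe that $|x| \leq \|x\|_{\ell^\infty}\, \id_A$ pointwise, where $A := \supp(x) \in \M$. Since the Luxemburg norm is monotone with respect to pointwise absolute value, this reduces everything to showing $\|\id_A\|_{\lp} < \infty$ and taking $C := \|\id_A\|_{\lp}$. To see this finiteness, I use that by Lemma \ref{characlpgerm},
\[
\omega(A) \;=\; \|[\id_A]\|_{\lp_\grm} \;=\; \inf\bigl\{\lambda > 0 : \sum_{k \in A} \lambda^{-p(k)} < \infty\bigr\},
\]
which is finite because $A \in \M$. Picking any $\lambda_1 > \omega(A)$ the series $\sum_{k \in A} \lambda_1^{-p(k)}$ converges, and dominated convergence (with summable dominator $\lambda_1^{-p(k)}$) lets me enlarge $\lambda_1$ to some $\lambda_0$ so that $\sum_{k\in A}\lambda_0^{-p(k)} \leq 1$. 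Thus $\|\id_A\|_{\lp}\leq \lambda_0 < \infty$, so $C < \infty$.

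Next, to show $x \in \overline{S_\pp}$, I would approximate $x$ by simple functions with finite image. Given $\varepsilon > 0$, partition the closed disk $\{z \in \C : |z| \leq \|x\|_{\ell^\infty}\}$ into finitely many pairwise disjoint Borel pieces $Q_1,\dots,Q_n$ of diameter less than $\varepsilon$, choose representatives $\alpha_i \in Q_i$, and set $A_i := \{k \in A : x(k) \in Q_i\}$. Each $A_i$ is a subset of $A \in \M$; since $\M$ is closed under taking subsets (the defining series only gets smaller), each $A_i \in \M$, and therefore $y := \sum_{i=1}^n \alpha_i \id_{A_i}$ belongs to $S_\pp$. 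By construction $|x - y| \leq \varepsilon\, \id_A$ and $\supp(x-y) \subseteq A$, so the norm bound of the previous step applied to $x - y$ gives $\|x - y\|_{\lp} \leq C\varepsilon$. Letting $\varepsilon \to 0$ produces a sequence in $S_\pp$ converging to $x$ in $\lp$. For the final density statement, if $\N$ itself has finite $\pp$-content then $\N \in \M$, so every $x \in \lp$ automatically has $\supp(x) \in \M$, and the first part of the proposition yields $x \in \overline{S_\pp}$; hence $S_\pp$ is dense in $\lp$.

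I do not expect any significant obstacle. The only delicate point is passing from the membership $A \in \M$, which only says $\sum_{k \in A} \lambda^{-p(k)}$ is finite for \emph{some} $\lambda$, to the stronger statement that for a specific $\lambda$ the modular of $\id_A/\lambda$ is at most $1$; this is handled by dominated convergence as above. After that, the approximation is a standard simple-function construction, the only novelty being that the defining level sets are required to lie in $\M$, which is automatic because $A_i \subseteq A \in \M$.
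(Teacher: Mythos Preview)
Your proposal is correct and follows essentially the same route as the paper. The paper's proof is slightly more packaged: it restricts to $A=\supp(x)$, invokes Corollary~\ref{p-inf-equivalence} to identify $\ell^{\pp|_A}(A)$ with $\ell^\infty(A)$ (which simultaneously gives the norm bound and reduces to the density of simple functions in $\ell^\infty$), whereas you unpack both steps explicitly---but the underlying argument is the same.
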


\begin{proof}
  Let $A := \supp(x)$.
  Since $x$ is supported on $A$, we have
  \begin{equation*}
    \|x\|_{\lp} = \|x\|_{\ell^{\pp|_A}(A)},
  \end{equation*}
  where $\pp|_A$ is the restriction of $\pp$ to $A$.
  Since $A$ has finite $\pp|_A$-content, by  Corollary
  \ref{p-inf-equivalence} we have that $\ell^{\pp|_A}(A)$ is
  isomorphic to $\ell^\infty(A)$.  Since simple functions are dense in
  $\ell^\infty(A)$ (see~\cite[IV.13.69]{MR1009162}), $x$ can be
  approximated by simple functions in $\ell^\infty(A)$, which are
  $\pp$-simple functions in $\ell^\pp(\N)$.  
\end{proof}

However, the converse of this result fails as the next example shows.
The growth condition allows for exponents $\pp$ that grow very slowly,
so it includes many exponents where $\N$ does not have fin{i}te
$\pp$-content.  Compare it to the examples in
Remark~\ref{remark:quick-growth}. 

\begin{proposition} \label{prop:in-Spp-closure}
  Given $\pp \in \Pp(\N)$, suppose $\N$ does not have finite
  $\pp$-content and 
  \[ \lim_{k\rightarrow 0} \frac{p(k)}{k} = 0.  \]
  Then there exists $x\in \ell^\pp$ such that $\supp(x)=\N$ (and so
  $\supp(x)\not\in \M$) but $x\in \overline{S_\pp}$.
\end{proposition}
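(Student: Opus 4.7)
The plan is to exhibit an explicit sequence $x$ with $\supp(x)=\N$ that nonetheless lies in $\overline{\ell^\pp_b}$, and then invoke Lemma~\ref{lemma:bounded-Spp} to conclude $x \in \overline{S_\pp}$. The natural candidate, suggested by the fact that the hypothesis $p(k)/k \to 0$ makes $k/p(k)$ tend to infinity, is
\begin{equation*}
  x(k) := 2^{-k/p(k)}.
\end{equation*}
Since $p(k) \geq 1$ for all $k$, every $x(k)$ is strictly positive, so $\supp(x)=\N$, which is not in $\M$ by hypothesis. A direct calculation gives $\rho_\pp(x) = \sum_k |x(k)|^{p(k)} = \sum_k 2^{-k} = 1$, so $x \in \lp$ with $\|x\|_\lp \leq 1$.

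The bulk of the proof will be to show $x \in \overline{\ell^\pp_b}$. I would use Lemma~\ref{lemma:Lpb-vanish-infty}: it suffices to show that whenever $\{E_k\}_{k=1}^\infty$ is a sequence of sets with $E_k^c$ $\pp$-bounded and $p_-(E_k)\to \infty$, we have $\|x\id_{E_k}\|_\lp \to 0$. Given $\lambda \in (0,1)$, write $\lambda^{-1} = 2^m$ and compute
\begin{equation*}
  \rho_\pp(x\id_{E_k}/\lambda) = \sum_{j\in E_k} 2^{-j} \lambda^{-p(j)} = \sum_{j \in E_k} 2^{-j + mp(j)}.
\end{equation*}
Using $p(j)/j \to 0$, choose $J$ so that $p(j) < j/(2m)$ for all $j \geq J$, and set $M := \max\{p(j): j < J\}$. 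For $k$ large enough that $p_-(E_k) > M$, every $j \in E_k$ must satisfy $j \geq J$, so the exponents satisfy $-j+mp(j) < -j/2$. Combining this observation with $p_-(E_k) \to \infty$ also forces $\min E_k \to \infty$, because $j \in E_k$ with $j \geq J$ gives $j > 2m\, p_-(E_k)$. Consequently
\begin{equation*}
  \rho_\pp(x\id_{E_k}/\lambda) \leq \sum_{j \geq \min E_k} 2^{-j/2} \xrightarrow[k \to \infty]{} 0,
\end{equation*}
so $\|x\id_{E_k}\|_\lp \leq \lambda$ eventually; as $\lambda \in (0,1)$ is arbitrary, $\|x\id_{E_k}\|_\lp \to 0$.

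Lemma~\ref{lemma:Lpb-vanish-infty} then yields $x \in \overline{\ell^\pp_b}$, and Lemma~\ref{lemma:bounded-Spp} gives the desired conclusion $x \in \overline{S_\pp}$. The main obstacle is the second step: one must extract from the single slow-growth hypothesis $p(k)/k\to 0$ both that the exponents $-j+mp(j)$ beat $-j/2$ for large $j$, and that the indices in $E_k$ themselves diverge (not merely the values of $p$ on $E_k$). Once this double use of the hypothesis is carefully packaged, the rest of the argument is bookkeeping with the modular.
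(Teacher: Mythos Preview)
Your proposal is correct and follows essentially the same approach as the paper: the same sequence $x(k)=2^{-k/p(k)}$ is constructed, shown to lie in $\overline{\ell^\pp_b}$, and then Lemma~\ref{lemma:bounded-Spp} is invoked. The only difference is cosmetic---the paper uses the root test together with Lemma~\ref{characlpgerm} to see directly that $\|[x]\|_{\lp_\grm}=0$, whereas you verify the equivalent condition of Lemma~\ref{lemma:Lpb-vanish-infty} by a hands-on modular estimate.
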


\begin{proof}
  Define $x$ by $x(k)= 2^{-k/p(k)}$.  Then for any $\lambda>0$,
  \[ \lim_{k\rightarrow \infty}
    \left(\frac{x(k)}{\lambda}\right)^{p(k)/k} = \frac{1}{2}, \]
  and so by the root test,
  \[ \sum_{k=1}^\infty \left(\frac{x(k)}{\lambda}\right)^{p(k)} <
    \infty. \]
  Hence, by Lemma~\ref{characlpgerm}, $\|[x]\|_{\ell^\pp_\grm}=0$.
  Thus, $x \in \overline{\ell^\pp_b}$ and so by
  Lemma~\ref{lemma:bounded-Spp}, $x\in \overline{S_\pp}$. 
\end{proof}

\subsection{$\ell^\pp$ far from $\ell^\infty$}

In the case when $S_\pp$ is not dense in $\lp$ we are not able to
fully characterize the dual space of $(\lp)^*$.  We can, however, give a
direct sum decomposition which in some sense isolates the remaining
difficulties.

\begin{theorem}\label{thm:lp-dual-case3}
Given $\pp \in \Pp(\N)$, the dual space $\left(\ell^{p(\cdot)}\right)^*$ is
isomorphic to the external direct sum
$$
 \ell^{p'(\cdot)}\oplus pba_\omega(\A) \oplus \left(\lp /\overline{S_\pp}\right)^*.
$$
\end{theorem}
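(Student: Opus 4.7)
The strategy is to iterate the quotient-space decomposition of Remark~\ref{rem:decompositionDual}, using Theorem~\ref{dual-splitting} for the first split and a variant of Theorem~\ref{thm:dual-lp-case2} for the second.

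First, by Theorem~\ref{dual-splitting}, $(\ell^\pp)^* \cong \ell^\cpp \oplus (\ell^\pp_\grm)^*$, and it suffices to decompose $(\ell^\pp_\grm)^*$. By Lemma~\ref{lemma:bounded-Spp}, $\overline{\ell^\pp_b} \subset \overline{S_\pp}$, so $V := \overline{S_\pp}/\overline{\ell^\pp_b}$ is a closed subspace of $\ell^\pp_\grm = \ell^\pp/\overline{\ell^\pp_b}$, and the third isomorphism theorem identifies the quotient $\ell^\pp_\grm/V$ with $\ell^\pp/\overline{S_\pp}$. Applying Remark~\ref{rem:decompositionDual} to this inclusion yields
\begin{equation*}
  (\ell^\pp_\grm)^* \cong V^* \oplus (\ell^\pp/\overline{S_\pp})^*.
\end{equation*}

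Second, I would show that $V^* \cong pba_\omega(\M)$ by repeating the constructions of Theorem~\ref{thm:dual-lp-case2}, but now restricted to $V$. Given $\mu \in pba_\omega(\M)$, define $\phi_\mu$ on $S_\pp$ by $\phi_\mu\big(\sum_k \alpha_k \id_{A_k}\big) := \sum_k \alpha_k \mu(A_k)$. Proposition~\ref{prop:normXwithmax} and the definition of $\|\mu\|_{pba_\omega}$ give exactly the same estimate $|\phi_\mu(x)| \leq \|[x]\|_{\ell^\pp_\grm}\,\|\mu\|_{pba_\omega}$ used in Theorem~\ref{thm:dual-lp-case2}; since the image of $S_\pp$ in $\ell^\pp_\grm$ is by construction dense in $V$, this estimate extends $\phi_\mu$ to an element of $V^*$. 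Conversely, for $\phi \in V^*$, set $\mu_\phi(A) := \phi([\id_A])$ for $A \in \M$ (well defined since $\id_A \in S_\pp \subset \overline{S_\pp}$); the verification of property~(1), the bound $\|\mu_\phi\|_{pba_\omega} \leq \|\phi\|_{V^*}$ coming from property~(2), and the mutual inverse identities $\mu_{\phi_\mu}=\mu$ and $\phi_{\mu_\phi}=\phi$ all transfer verbatim. Combining the two decompositions gives the stated three-fold direct sum.

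The main conceptual point is noticing that the hypothesis ``$S_\pp$ dense in $\ell^\pp$'' in Theorem~\ref{thm:dual-lp-case2} is used only to identify the common target of both maps with $(\ell^\pp_\grm)^*$; the intrinsic argument there already constructs an isometric isomorphism $pba_\omega(\M) \cong V^*$ whether or not $S_\pp$ is dense, because the key estimate is phrased in the germ norm and $S_\pp$ is tautologically dense in $V$. Once this observation is in place, the proof requires no new estimates beyond those already established for Propositions~\ref{prop:normXwithmax} and for the isomorphism theorems on quotients, and the expected obstacle is purely bookkeeping about the chain of quotients $\overline{\ell^\pp_b} \subset \overline{S_\pp} \subset \ell^\pp$.
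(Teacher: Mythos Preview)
Your proposal is correct and follows essentially the same route as the paper: both iterate Remark~\ref{rem:decompositionDual} along the chain $\overline{\ell^\pp_b}\subset\overline{S_\pp}\subset\ell^\pp$ and then invoke the argument of Theorem~\ref{thm:dual-lp-case2} on the piece where $S_\pp$ is tautologically dense. The only difference is the order of the two splits---the paper first peels off $(\ell^\pp/\overline{S_\pp})^*$ and then applies Theorem~\ref{thm:dual-lp-case2} to $\overline{S_\pp}$, whereas you first peel off $\ell^\cpp$ via Theorem~\ref{dual-splitting} and then work inside $\ell^\pp_\grm$---but the substance and the key observation about Theorem~\ref{thm:dual-lp-case2} are identical.
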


\begin{proof}
If we apply Remark \ref{rem:decompositionDual} to the closed subspace
$\overline{S_\pp}$, we can immediately decompose $(\lp)^*$ as the
internal direct sum
$$
(\lp)^*=(\overline{S_\pp})^* \oplus \left(\lp/\overline{S_\pp} \right)^*.
$$ 
By Theorem \ref{thm:dual-lp-case2} we have that 
$$
(\overline{S_\pp})^* \simeq \lpprime \oplus pba_\omega(\M). 
$$
This completes the proof.
\end{proof}

At the end of Section~\ref{section:dense} we use the dense subset we
define to sketch a possible characterization of the third term,
$\left(\lp/\overline{S_\pp} \right)^*$.  However, this
characterization is complicated and somewhat artificial, and the
problem of completely characterizing $(\lp)^*$ remains open.

\section{Dense subspaces of $\Lp$ and $\lp$}
\label{section:dense}

As the proofs of our main results in Sections~\ref{section1}
and~\ref{section2} show, there is a close connection between the
problem of characterizing $(\Lp)^*$ and finding practical dense
subsets of $\Lp$ when $\pp$ is unbounded.  This problem is of
independent interest and was raised as an open question
in~\cite[Problem~A.2]{fiorenza-cruzuribe2013} (though the connection
with dual spaces was not noted there).   In this section we give two
answers to this problem, one for general $\Lp$ spaces, and one
specifically for the sequence space $\lp$.   Neither has yielded a
satisfactory characterization of the dual space but we believe that
they are interesting in their own right and provide a foundation for
further work.

\subsection{A dense subset of $\Lp$}
We begin with a generalization of the simple functions to $\Lp$.
\begin{definition}
 Define the set of \emph{$\pp$-countable functions} in $\Lp$
        by
 $$
 \mathcal{C}_{\pp}:=\{f\in \Lp: f \text{ restricted to any $\pp$-bounded subset is a simple function}\} .
 $$
\end{definition}

\begin{remark}\label{rem:pp-countable}
  If $p_+ < \infty$, then a function is $\pp$-countable if and only if
  it is simple. In the particular case when $\Omega=\N$, every
  function is $\pp$-countable, so this set gives no new information in
  the case of variable sequence spaces.
\end{remark}

\begin{remark}
Any $\pp$-countable function takes at most a countable number of values.
\end{remark}




\begin{proposition}
Given $\pp\in \Pp$, $\mathcal{C}_{\pp}$ is dense in $L^\pp$.
\end{proposition}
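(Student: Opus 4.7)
The plan is to partition $\Omega$ into countably many $\pp$-bounded pieces, approximate $f$ in $\Lp$ norm by a genuine simple function on each piece, and then paste the local approximants together to build a globally $\pp$-countable function close to $f$. The key observation is that simple functions are dense in $\Lp$ on bounded-exponent pieces, and that the disjointness of the pieces lets us add modulars without loss.

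Fix $f \in \Lp$ and $\varepsilon > 0$. Since $p$ is finite $\mu$-a.e., the sets $F_n := \{x \in \Omega : p(x) \in [n, n+1)\}$, $n \in \N$, partition $\Omega$ up to a null set, and $p_+(F_n) \leq n+1 < \infty$ for every $n$. Because $\pp$ is bounded on $F_n$, the space $\Lp(F_n)$ behaves like a classical Lebesgue space: simple functions (in the usual sense, i.e.\ finite linear combinations of indicators of finite-measure sets) are dense in $\Lp(F_n)$, and convergence in $\Lp(F_n)$-norm implies convergence in the modular at any fixed dilate. Hence for each $n$ we may select a simple function $s_n$ supported in $F_n$ with
\begin{equation*}
  \rho_\pp\bigl((f\id_{F_n} - s_n)/\varepsilon\bigr) \leq 2^{-n}.
\end{equation*}
Define $g := \sum_{n=1}^\infty s_n$; this sum converges pointwise a.e.\ since the supports $\supp(s_n) \subset F_n$ are pairwise disjoint.

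To verify that $g \in \mathcal{C}_\pp$, let $E \subset \Omega$ be $\pp$-bounded with $p_+(E) = M$. Then $E$ intersects $F_n$ in a set of positive measure only for finitely many $n$ (namely $n \leq M+1$), so $g\id_E$ equals the finite sum $\sum_{n \leq M+1} s_n\id_E$, which is itself simple. Finally, since $f - g = \sum_n (f\id_{F_n} - s_n)$ is a sum with pairwise disjoint supports, the modular splits as a series and
\begin{equation*}
  \rho_\pp\bigl((f - g)/\varepsilon\bigr) = \sum_{n=1}^\infty \rho_\pp\bigl((f\id_{F_n} - s_n)/\varepsilon\bigr) \leq \sum_{n=1}^\infty 2^{-n} = 1,
\end{equation*}
whence $\|f - g\|_\pp \leq \varepsilon$. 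In particular $f - g \in \Lp$, so $g \in \Lp$ as well, completing the density argument.

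The only delicate point is arranging the bound $\rho_\pp((f\id_{F_n} - s_n)/\varepsilon) \leq 2^{-n}$ at a single scale $1/\varepsilon$ that does \emph{not} depend on $n$. This is not an obstacle because $\|f\id_{F_n} - s_n\|_\pp$ can be made arbitrarily small by density, and on the bounded-exponent piece $F_n$ norm convergence forces $\rho_\pp(\cdot/\varepsilon) \to 0$; so the summability $\sum_n 2^{-n} = 1$ is achievable by a term-by-term choice of sufficiently good $s_n$.
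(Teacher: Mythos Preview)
Your proof is correct and follows essentially the same route as the paper: partition $\Omega$ into the level sets $F_n = \{p \in [n,n+1)\}$, approximate $f$ by a simple function on each piece using density for bounded exponents, and paste. The only cosmetic difference is bookkeeping: you control the modular $\rho_\pp((f-g)/\varepsilon) \leq 1$ directly via additivity over disjoint supports, whereas the paper controls the norm by summing $\sum_n \|f\id_{F_n} - g_n\|_\pp < \sum_n \varepsilon 2^{-n}$ through Minkowski's inequality.
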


\begin{proof}
  For each $k \in \N$ define $\Omega_k:=p^{-1}([k,k+1))$; then the
  $\Omega_k$ are measurable, $\pp$-bounded, pairwise disjoint, and
  their union is all of $\Omega$.  Fix $\varepsilon>0$.  The simple
  functions are dense in $\Lp(\Omega_k)$, so for every $k\in \N$
  there exists a simple function $g_k$, $\supp(g_k)\subset \Omega_k$,
  such that $\Vert f\id_{\Omega_k}-g_k\Vert_{\pp}< \varepsilon
  2^{-k}$.
  
  Define the  function $g$ to  coincide with $g_k$ in
  $\Omega_k$ for every $k\in \N$; then $g$  is $\pp$-countable.
  Moreover, by Minkowski's inequality,
  \[ \|f-g\|_{\Lp} = \bigg\| \sum_{k=1}^\infty
    (f\chi_{\Omega_k}-g_k)\bigg\|_{\Lp}
    \leq \sum_{k=1}^\infty \|f\chi_{\Omega_k}-g_k\|_{\Lp} <
    \varepsilon. \]
\end{proof}

\subsection{A dense subset of $\lp$}
As we noted above in Remark \ref{rem:pp-countable}, the set of
  $\pp$-countable functions in $\lp$ is the whole space, so we do not
  gain a strictly smaller dense subset. Moreover, as we showed in Example~\ref{exam:SpNotDense},
the collection $S_\pp$ may not be dense in $\lp$. However, we are
will show that we can build a {nontrivial} dense subset out of
sequences of elements in $S_\pp$.

\begin{definition} \label{defn:Z-space}
 Given $\pp \in \Pp(\N)$, for each $m
\in \Z$, define $S_\pp^m$ to be the subset of $S_\pp$ such that if
$x\in S_\pp^m$, then $|x(k)|\in (2^{m-1},2^m]\cup \{0\}$.
Define the set $\mathcal{Z}_\pp$ to consist of all
  sequences $x\in \lp$ such that  for some $M\in \Z$, there exists a
  sequence $\{y_m\}_{m=-\infty}^M$ such that $y_m\in S_\pp^m$,
  $\supp(y_m)\cap \supp(y_n)=\emptyset$ if $n\neq m$,  and 
  \[ x =  \sum_{m=-\infty}^M y_m.  \]
\end{definition}

\begin{remark}
  In Definition~\ref{defn:Z-space}, the sum of the $y_m$ makes sense
  pointwise, since for each $k$ there is at most one $y_m$
  which contains a non-zero element in the $k$-th coordinate, so the
  sum always converges.     However, this series may not converge in
  norm. 
\end{remark}

If $x\in \mathcal{Z}_\pp$, then  by Lemma~\ref{easyellpinclusion}, $\lp\subset \ell^\infty$,
and so there exists $M \in \Z$ such that 
$\acc(x)\subset [-2^M,2^M]$; moreover,  for each integer $m\leq M$,
$\acc(x)\cap \big( [-2^m,-2^{m-1})\cup (2^{m-1},2^m]\big)$ is finite.  
 Therefore, if $\acc(x)$ is
  infinite, it can be at most countable, and if this set has a limit
  point, it must be equal to $0$.

\begin{proposition}\label{prop:Zdense}
 Given $\pp \in \Pp(\N)$,  $\mathcal{Z}_\pp$ is dense in\ $\lp$.
\end{proposition}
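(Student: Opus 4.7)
The plan is to prove, for any $x \in \lp$ and any $\varepsilon > 0$, the existence of $z \in \mathcal{Z}_\pp$ with $\|x - z\|_\lp < \varepsilon$. The construction decomposes $x$ according to the dyadic bands of its absolute value, shows each such band sits in $\M$, and then discretizes $x$ finely on each band to produce the pieces $y_m \in S_\pp^m$.

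First I would fix $\lambda > 0$ with $\rho(x/\lambda) \leq 1$ (possible since $x \in \lp$), arranging by a tiny enlargement that $\lambda$ is not an integer power of $2$. By Lemma~\ref{easyellpinclusion}, $\|x\|_\infty \leq \|x\|_\lp \leq \lambda$, so there is $M \in \Z$ with $\lambda \leq 2^M$, and the sets
\[ B_m := \{k \in \N : |x(k)| \in (2^{m-1}, 2^m]\}, \qquad m \leq M, \]
are pairwise disjoint with union $\supp(x)$ (and $B_m = \emptyset$ for $m > M$). The key geometric step is to verify that each $B_m$ lies in $\M$. If $2^{m-1} > \lambda$, then on $B_m$ we have $|x(k)/\lambda|^{p(k)} \geq 1$, so $\rho(x/\lambda) \leq 1$ forces $B_m$ to be finite; if $2^{m-1} < \lambda$, set $C_m := \lambda/2^{m-1} > 1$, so that
\[ \sum_{k \in B_m} C_m^{-p(k)} \leq \sum_{k \in B_m} |x(k)/\lambda|^{p(k)} \leq 1, \]
which shows $B_m \in \M$.

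Next I would pick $N \in \N$ with $\lambda/2^N < \varepsilon$, partition $(2^{m-1}, 2^m] \cup [-2^m, -2^{m-1})$ into $2\cdot 2^N$ sub-intervals $\{I_{m,j}\}_j$ of length $2^{m-1-N}$, set $A_{m,j} := \{k \in B_m : x(k) \in I_{m,j}\}$ (which lies in $\M$ as a subset of $B_m$), choose $\alpha_{m,j} \in I_{m,j}$, and define $y_m := \sum_j \alpha_{m,j}\id_{A_{m,j}} \in S_\pp^m$. The pointwise sum $z := \sum_{m \leq M} y_m$ is well-defined by disjointness of the $B_m$ and the $\supp(y_m)$ are disjoint as required. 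The final estimate rests on the pointwise bound $|x(k) - z(k)| \leq 2^{m-1-N} \leq 2^{-N}|x(k)|$ for $k \in B_m$, where the right-hand inequality uses $|x(k)| > 2^{m-1}$. Setting $\mu := \lambda/2^N < \varepsilon$, this gives $|(x-z)(k)/\mu| \leq |x(k)|/\lambda$, and raising to the power $p(k) \geq 1$ and summing yields $\rho((x-z)/\mu) \leq \rho(x/\lambda) \leq 1$; hence $\|x - z\|_\lp \leq \mu < \varepsilon$, and in particular $z = x - (x-z) \in \lp$, confirming $z \in \mathcal{Z}_\pp$.

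The main obstacle is the finite-$\pp$-content claim $B_m \in \M$: without it the pieces $y_m$ could not legitimately be placed in $S_\pp^m$. The crucial idea that then makes the estimate work is measuring the discretization error \emph{relative} to $|x(k)|$ rather than by a uniform constant, which is what lets the contributions from the infinitely many bands collapse into the single finite quantity $\rho(x/\lambda)$ without any further assumption on $\pp$.
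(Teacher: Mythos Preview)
Your proof is correct. Both you and the paper use the same dyadic decomposition into bands $B_m = \{k : |x(k)| \in (2^{m-1},2^m]\}$ and the same verification that each band lies in $\M$. The difference is in how the approximation on each band is carried out and how the errors are summed. The paper invokes Proposition~\ref{prop:w-dense} (which in turn uses the isomorphism $\ell^{\pp|_A}(A)\cong\ell^\infty(A)$ for $A\in\M$) to pick $y_m\in S_\pp^m$ with $\|x\id_{B_m}-y_m\|_{\lp}<2^{m-M-1}\varepsilon$, and then sums these norm errors via the triangle inequality. You instead discretise each band explicitly and exploit the \emph{relative} pointwise bound $|x(k)-z(k)|\leq 2^{-N}|x(k)|$, which feeds directly into the modular and gives $\rho((x-z)/\mu)\leq\rho(x/\lambda)\leq 1$ in one stroke. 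Your route is a bit more self-contained (it avoids Proposition~\ref{prop:w-dense}) and the single modular estimate is cleaner than an infinite triangle-inequality sum; the paper's route is shorter to state once that proposition is available.
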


\begin{proof}
  Since $x\in \ell^\infty$ by Lemma~\ref{easyellpinclusion}, there exists $M\in \Z$ such that  $0\leq x(k)\leq 2^M$.  For each $m$, $-\infty < m \leq M$, define
  \[ A_m := \{ k \in \N : |x(k)| \in (2^{m-1}, 2^m] \}.  \]
  Then, since for some $\lambda>0$,
  \[ \sum_{k\in A_m} \lambda^{-p(k)}
    \leq \sum_{k\in A_m} \bigg(\frac{x(k)}{2^m \lambda}\bigg)^{p(k)}
    \leq \sum_{k=1}^\infty  \bigg(\frac{x(k)}{2^m
      \lambda}\bigg)^{p(k)}
    <\infty, \]
  we have that $\omega(A_m)<\infty$.  Therefore, by
  Proposition~\ref{prop:w-dense}, $x_m= x\id_{A_m}$ is contained in the
  closure of $S_\pp$.  Moreover, since all the values of $x_m$ are
  either $0$ or contained in $(2^{m-1}, 2^m]$, $x_m$ is in the closure
  of $S_\pp^m$ and in fact we can approximate it by elements of
  $S_\pp^m$ whose supports are contained in $A_m$.  

  Fix $\varepsilon>0$ and fix $y_m \in S_\pp^m$ such that
  $\|x_m-y_m\|_{\lp} < 2^{m-M{-1}}\varepsilon$.   Define the sequence $y$
  {to be the one that coincides pointwise with the series}
  \[ y = \sum_{m=-\infty}^M y_m.  \]
 By the triangle inequality we have that
  $$
  \|y\|_{\lp}\leq \|x\|_{\lp}+\|x-y\|_{\lp}; 
  $$
  moreover, since the sets $A_m$ are disjoint, 
  \[ \|x-y\|_{\lp} \leq \sum_{m=-\infty}^M \|x_m-y_m\|_{\lp} <
    \varepsilon.  \]
Therefore, we have that $y\in \mathcal{Z}_\pp$ and, since $\varepsilon>0$ is
arbitrary, it follows that $\mathcal{Z}_\pp$ is dense in $\lp$.
\end{proof}

\medskip

We conclude this section by using the set $\mathcal{Z}_\pp$ to sketch a
characterization of  the third term of the dual
space $(\lp)^*$ given in Theorem~\ref{thm:lp-dual-case3}.  This
characterization is unsatisfactory since it is both complicated and
artificial, and we are not able to give reasonable examples of
elements in it.  Therefore, we present it as a starting point for
future work.

To describe this characterization, note first that each element of
$\mathcal{Z}_\pp$ can be identified with a sequence of ordered pairs
$\{(A_k,\alpha_k)\}_{k=1}^\infty$, where the $A_k$ are pairwise
disjoint sets in $\M$, and the $\alpha_k$ are real numbers such that
$\alpha_k\rightarrow 0$ as $k\rightarrow \infty$.  More precisely, for
each $m\leq M$, we have that
$y_m=\sum_{i=1}^{N_m}\alpha_{i,m} \id_{A_{i,m}}$ with $A_{i,m}$
disjoint.  We can then take the doubly indexed sequence
$\{(A_{i,m}, \alpha_{i,m} )\}$ and renumber the elements to get
$\{(A_k,\alpha_k)\}_{k=1}^\infty$. For ease of notation, we denote the associated
element of $\mathcal{Z}_\pp$ by $x(A_k,\alpha_k)$.

Conversely, given a sequence $\{(A_k,\alpha_k)\}_{k=1}^\infty$, where
the $A_k\in \M$ are pairwise disjoint and $\alpha_k\to 0$, we can
define a bounded sequence $\{x(j)\}_{j=1}^\infty$ by setting
$x(j)= \alpha_k$ if $j \in A_k$.  However, not every such sequence
induces an element of $\mathcal{Z}_\pp$: if $p(k)\rightarrow \infty$ slowly,
then we can choose the $A_k$ so that
$\|\id_{A_k}\|_{\lp} \rightarrow \infty$ while we choose the
$\alpha_k\rightarrow 0$ so slowly that $x(A_k,\alpha_k)$ is not in
$\lp$.  We will denote the larger collection of all such sequences by
$\mathcal{Z}(\M,c_0)$.

The goal is to follow the ideas in Theorem~\ref{thm:dual-lp-case2} and
define a collection of set functions on $\mathcal{Z}(\M,c_0)$ that mimic purely
finitely additive measures, but with the additional property that they
are zero on any element of $S_\pp$.   Denote these set functions by
$pba(\mathcal{Z}(\M,c_0) )$. Such set functions should
be linear:  
given $\{(A_k,\alpha_k)\}_{k=1}^\infty$, 
 $\{(B_k,\beta_k)\}_{k=1}^\infty$ in $ \mathcal{Z}(\M,c_0)$, 
\begin{enumerate}
\item if $A_k\cap B_k = \emptyset$ for all $k\in \N$, then
  \[ \rho(\{(A_k\cup B_k, \alpha_k)\}_{k=1}^\infty)
  = \rho(\{(A_k,\alpha_k)\}_{k=1}^\infty) + 
\rho(\{(B_k,\alpha_k)\}_{k=1}^\infty); \]
  
\item
  $\rho(\{(A_k,\alpha_k+\beta_k)\}_{k=1}^\infty)
  =\rho(\{(A_k,\alpha_k)\}_{k=1}^\infty)
  +\rho(\{(A_k,\beta_k)\}_{k=1}^\infty)$.
\end{enumerate}
They should also be bounded on $\lp{/\overline{S_\pp}}$: there exists a constant $C>0$
such that if $x(A_k,\alpha_k)\in \lp$, then
  \[ |\rho(\{(A_k,\alpha_k)\}_{k=1}^\infty)|
    \leq
    C \Vert [x\left(A_k,\alpha_k \right)]\Vert_{\lp{/\overline{S_\pp}}}. \]
 Given these properties,  we can mimic
  the proof of Theorem~\ref{thm:dual-lp-case2} to show that $(\lp)^*$
  is isomorphic to the external direct sum
\[   \ell^{p'(\cdot)}\oplus pba_\omega(\A) \oplus pba(\mathcal{Z}(\M,c_0)). \]
Details are left to the interested reader.

This characterization is impractical, as it is difficult to  exhibit
non-trivial elements of $pba(\mathcal{Z}(\M,c_0))$ even for very simple exponent
functions $\pp$.


 \bibliography{BibliographyVariableLp}{}
 \bibliographystyle{abbrv}

\end{document}